\newtheorem{Thm}{Theorem}[section]
\newtheorem{Cor}[Thm]{Corollary}
\newtheorem{Prop}[Thm]{Proposition}
\newtheorem{``Conj"}[Thm]{``Conjecture"}
\newtheorem{Claim}[Thm]{Claim}
\newtheorem{Fact}[Thm]{Fact}
\theoremstyle{remark}
\newtheorem{Rem}[Thm]{Remark}
\theoremstyle{definition}
\newtheorem{Def}[Thm]{Definition}
\newtheorem*{ack}{Acknowledgements}
\newcommand{\Supp}{\mathop{\mathrm{Supp}}\nolimits}
\newcommand{\DF}{\mathop{\mathrm{DF}}\nolimits}
\begin{document}

\title[Log K-stability]
{Testing Log K-stability by blowing up formalism}

\author{Yuji Odaka}
\address{Research Institute for Mathematical Sciences, Kyoto University,
Kyoto, Japan}
\email{yodaka@kurims.kyoto-u.ac.jp}

\author{Song Sun}
\address{Department of Mathematics, Imperial College,  London, United Kingdom}
\email{s.sun@imperial.ac.uk}

\date{6th December, 2011}

\maketitle

\begin{abstract}
We study logarithmic K-stability for pairs by extending the formula for Donaldson-Futaki invariants to log setting. 
We also provide algebro-geometric counterparts of recent results of existence of K\"ahler-Einstein metrics with cone singularities. 
\end{abstract}
\tableofcontents

\section{Introduction}

One of the central issue of recent developments of K\"ahler geometry is 
on the conjectural relationship between the existence of ``canonical" K\"ahler metrics and 
stability in certain sense. Along that line, K-stability is first defined by Tian \cite{Tia97} 
and later generalized by Donaldson \cite{Don02}. 

The logarithmic K-stability with parameter $\beta\in(0,1]$ is defined in \cite{Don11} which conjecturally corresponds to the existence of K\"ahler-Einstein metrics with cone angle $2\pi\beta$ along the  divisor on Fano manifolds. 

The purpose of this paper is to extend most of results for K-stability given in 
\cite{Od08}, \cite{Od11a}, \cite{Od11b}, \cite{Od11c}, \cite{OS10} to this 
logarithmic setting which concerns a pair $(X, D)$. 
On the way, we  extend
\cite[Theorem 1.1]{Sun11} purely algebraically, allowing more general anti-canonical divisors. We also recover algebraic counterparts of  \cite[Theorem 1.8]{Ber10}, \cite[Theorem 1.1]{Bre11} and \cite[Theorem 2]{JMR11}. This provides more  evidence for the above logarithmic Yau-Tian-Donaldson conjecture.

We expect that these will have meanings even in the absolute case. 
On the one hand, Donaldson has  recently proposed an approach of constructing K\"ahler-Einstein metrics on Fano manifolds by deforming  K\"ahler-Einstein metrics with 
edge singularities along the anti-canonical divisor. On the other hand, the minimal model program (MMP, for short)  is nowadays naturally studied in  log setting as it is also useful to absolute case study, giving an inductive framework on dimension based on adjunction argument. We expect and partially prove in this paper 
that the relation with stability and birational geometric framework based on MMP 
(which first appeared in \cite{Od08} and developed in \cite{Od11b}, \cite{LX11} etc) 
fits this expectation. 

We make several remarks here. First, about the pathology in \cite{LX11}; they 
pointed out the necessity to restrict attention to 
only test configurations which {\it satisfy the $S_2$ condition 
(or normality, for normal original variety)} for the definition of K-stability. It does not violate our arguments and actually it is compatible with the framework of \cite{Od11a},  as we explain later in section \ref{bl.up.sec}. Second, although we argue about a variety $X$ with an \textit{integral} divisor $D$ unless otherwise stated, the following argument mostly works to give extension to the case where $D$ can be a $\mathbb{Q}$-divisor. 

We work over $\mathbb{C}$, the complex number field, although a large part 
of arguments in this paper works over more general fields as it is purely 
algebro-geometric. We will use linear equivalence class of Cartier divisor, 
invertible sheaf, line bundle interchangeably. 

\begin{ack}
Both authors would like to  thank Professor Simon Donaldson for his helpful advice. 
This joint work started when both authors attended 
``2011 Complex geometry and symplectic geometry conference" held at the University of Science and Technology of China, Hefei, in August, 2011. Later, the  discussion developed when the first author visited the Imperial College London in November, 2011. 
Both authors are grateful to Professor Xiu-xiong Chen 
for invitation to the conference and nice hospitality. 
The first author is also grateful to Professor Simon Donaldson for making the opportunity of visiting the Imperial College London, and to both Professors Simon Donaldson and Richard Thomas for warm hospitality. 

During the preparation of this paper, we are informed by C. Li and C. Xu that in the article \cite{LX} to appear, they also prove independently Theorem \ref{CY.gen} and a version of Theorem \ref{DF.formula}, and their approach is quite different.

Y.O is partially supported by the Grant-in-Aid for Scientific Research (KAKENHI No.\ 21-3748) and the Grant-in-Aid for JSPS fellows. 
\end{ack}

\section{Preliminary}

\subsection{Basics of discrepancy}\label{disc}

Consult \cite{KM98} for the details. 
Along the development of the (log) minimal model program in a few decades, 
some mild singularities should have been admitted on varieties in concern. 
On that way, the theory of discrepancy and some mild singularities' class developed. 
We review these as the efficiency of that theory in the  study of stability turned out in \cite{Od08} and developed in \cite{Od11a}, \cite{Od11b}, \cite{LX11} etc which we follow. 

Assume $(X,D)$ to be a pair of a normal variety $X$ and an effective $\mathbb{Q}$-divisor $D$ in this section. $D$ is usually referred as a boundary divisor. Let $\pi\colon X'\to X$ be a log resolution of $D$, i.e., $\pi$ is a proper birational morphism such that $X'$ is smooth and the divisor $\pi^*D+E$ has a simple normal crossing support, where $E$ is the exceptional divisor of $\pi$.
Let $K_{X'/X}:=K_{X'}-\pi^*K_{X}$. 
Then, we denote 
\begin{equation*}\label{eq:discrepancy}
	K_{X'}-\pi^*(K_X+D)=\sum a_{i}E_{i}, 
\end{equation*} 
where $a_{i} \in \mathbb{Q}$ and $E_{i}$ runs over the set of divisors of $X'$ supported on the exceptional locus or the support $\Supp(\pi^{-1}_{*}D)$ of the strict transform of $D$. $a_i$'s are the so called {\it discrepancy} (of $(X,D)$ for $E_i$) which measures 
the mildness of singularities. We usually write it as $a(E_i;(X,D)):=a_i$. 
We also note that under the above situation, we can also consider discrepancy of pair $(X,D+cI)$ attached with additional coherent ideal $I\subset \mathcal{O}_{X}$ 
multiplied formally by some real number $c$ as follows: 
$$
a(E_i;(X,D+cI)):=a(E_i;(X,D))-c{\rm val}_{E_i}(I)
$$ 
\noindent 
where ${\rm val}_{E_i}$ means valuation of ideal $I$ measured by $E_i$. 
(In that case, we might get discrepancies which are only {\it real} numbers. )
Under above notation, we define some classes of mild singularities as follows: 

\begin{Def}
The pair $(X,D)$ is called \textit{log canonical} (lc, for short) 
if and only if $a_{i}\ge -1$ for any $E_{i}$. 
\end{Def}
\noindent 
From the definition, for $(X,D)$ to be log canonical, all the coefficients of $D$ should be at most $1$. 

For a stronger notion, log terminality, we have following versions for pairs. 
\begin{Def}
{\rm (i)}
The pair $(X,D)$ is called \textit{kawamata log terminal} (klt, for short)
if and only if $a_{i}> -1$ for any $E_{i}$. 
{\rm (ii)}
The pair $(X,D)$ is called \textit{purely log terminal} (plt, for short)
if and only if $a_{i}> -1$ for any exceptional $E_{i}$. 
\end{Def}

\noindent
If we allow negative coefficients for $D$, these conditions give definitions of 
{\it sub} kawamata-log-terminality (resp.\ {\it sub} log canonicity). 
Note these definitions are independent of the choice of log resolution. 

One advantage of considering pairs is the following inversion of adjunction, 
which relates mildness of singularity of pair to that of their boundary. 

\begin{Thm}[{Inversion of adjunction \cite[section 5.2]{KM98}, \cite{Kaw07}}]\label{IA}
Assume $D$ is decomposed as $D=D'+D''$ where $D'$ is an effective integral reduced normal Cartier divisor and $D''$ is also an effective $\mathbb{Q}$-divisor which has 
no common components with $D'$. Then the followings hold. 

{\rm (i)}$(X,D)$ is purely log terminal on some open neighborhood of $D'$ 
if and only if $(D',D''|_{D'})$ is kawamata log terminal. 

{\rm (ii)}$(X,D)$ is log canonical on some open neighborhood of $D'$ if and only if $(D',D''|_{D'})$ is log canonical. 
\end{Thm}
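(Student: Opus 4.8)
The plan is to move between the two sides of each equivalence by means of the adjunction formula, and then to compare log discrepancies on a common log resolution; the deep ingredients are the Shokurov--Koll\'ar connectedness of the non-klt locus and, for the log canonical case, Kawakita's theorem. Since $D'$ is a normal integral Cartier divisor, ordinary adjunction gives $(K_X+D')|_{D'}=K_{D'}$, and since $D'$ and $D''$ have no common component, $(K_X+D)|_{D'}=K_{D'}+D''|_{D'}$; this identity is what lets one match discrepancies of $(X,D)$ along divisors meeting $D'$ with discrepancies of $(D',D''|_{D'})$.

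For the forward (adjunction) implication in both (i) and (ii), I would choose a log resolution $\pi\colon Y\to X$ of $(X,D)$ for which, in addition, the strict transform $\widetilde{D'}$ is smooth and $\pi|_{\widetilde{D'}}\colon\widetilde{D'}\to D'$ is a log resolution of $(D',D''|_{D'})$ dominating any prescribed one; this is possible because plt (resp.\ lc) of $(X,D)$ near $D'$ forces $D'$ to be normal, so $\widetilde{D'}$ dominates $D'$, and one may always blow up $X$ along centres mimicking a given tower of blow-ups of $D'$. Writing $K_Y+\widetilde{D'}=\pi^*(K_X+D)+\sum_{E\ne\widetilde{D'}}a(E;(X,D))\,E$ and restricting to $\widetilde{D'}$ — using $(K_Y+\widetilde{D'})|_{\widetilde{D'}}=K_{\widetilde{D'}}$ and the adjunction identity above — yields $K_{\widetilde{D'}}=(\pi|_{\widetilde{D'}})^*(K_{D'}+D''|_{D'})+\sum_E a(E;(X,D))\,(E\cap\widetilde{D'})$, so that $a(E\cap\widetilde{D'};(D',D''|_{D'}))=a(E;(X,D))$ for every $E$ meeting $\widetilde{D'}$. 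Letting $Y$ and $E$ vary so that $E\cap\widetilde{D'}$ runs over all divisors over $D'$, the hypotheses $a(E;(X,D))>-1$ for exceptional $E$ (plt), resp.\ $a(E;(X,D))\ge-1$ for all $E$ (lc), descend to $(D',D''|_{D'})$.

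The reverse (inversion) implication in (i) is the crux. Fix a log resolution $\pi\colon Y\to X$ of $(X,D)$ and define $\Delta_Y$ by $K_Y+\Delta_Y=\pi^*(K_X+D)$, so that $\Delta_Y=-\sum_i a(E_i;(X,D))\,E_i$ has simple normal crossing support and its coefficient-$\ge 1$ part is precisely $\widetilde{D'}$ together with the exceptional divisors of discrepancy $\le-1$. As $-(K_Y+\Delta_Y)$ is $\pi$-numerically trivial, hence $\pi$-nef and (since $\pi$ is birational) $\pi$-big, the connectedness lemma \cite[\S5.2]{KM98} — which rests on Kawamata--Viehweg vanishing — applies: $\Supp(\Delta_Y^{\ge 1})$ meets every fibre of $\pi$ in a connected set. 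Now suppose $(D',D''|_{D'})$ is klt but $(X,D)$ is not plt near $D'$. Then (plt being detectable on a single log resolution) some exceptional $E_j$ over $X$ has $a(E_j;(X,D))\le-1$ with centre meeting $D'$, say at $x$; since $\pi^{-1}(x)$ meets both $\widetilde{D'}$ and $E_j$, both components of $\Delta_Y^{\ge 1}$, connectedness forces — after possibly a further blow-up — some component $E_k\ne\widetilde{D'}$ of $\Delta_Y^{\ge 1}$ to actually meet $\widetilde{D'}$. Such an $E_k$ has $a(E_k;(X,D))\le-1$, and it is exceptional over $X$ because every component of $D''$ through $D'$ has coefficient $<1$ (otherwise $(D',D''|_{D'})$ would already fail to be klt); restricting as before, $E_k\cap\widetilde{D'}$ is a divisor over $D'$ with $a(E_k\cap\widetilde{D'};(D',D''|_{D'}))\le-1$, contradicting klt. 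This proves (i).

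For (ii), the forward implication is the adjunction argument above, but connectedness does not by itself give the converse: it only controls the non-klt locus, so the argument just given merely shows that $(X,D)$ failing to be lc near $D'$ forces $(D',D''|_{D'})$ to fail to be \emph{klt}, which is weaker than the required failure of \emph{lc}. Promoting this to the log canonical threshold is exactly the precise inversion of adjunction on log canonicity, and here I would simply invoke Kawakita's theorem \cite{Kaw07} (whose own proof runs a delicate induction on a resolution, and can alternatively be obtained by extracting a divisor of discrepancy $-1$ through a relative minimal model program over $X$). So in the write-up, (i) is proved as above and (ii) follows by combining the elementary adjunction direction with \cite{Kaw07}; this last point is the main obstacle, and it is the reason the statement cites both \cite{KM98} and \cite{Kaw07}.
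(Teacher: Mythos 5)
Your outline is correct and is essentially the argument behind the references the paper itself points to: the paper gives no proof of Theorem \ref{IA}, merely quoting it from \cite[section 5.2]{KM98} and \cite{Kaw07} (with \cite{OX11} for generalizations), and your sketch --- adjunction by restricting the crepant pullback to the strict transform of $D'$, the Shokurov--Koll\'ar connectedness lemma for the plt/klt inversion in (i), and Kawakita's theorem for the lc inversion in (ii) --- is exactly the standard proof found in those sources. The only soft spot is the phrase ``after possibly a further blow-up'' in the connectedness step, which in the standard write-up is replaced by noting that the non-klt locus of the fibre is connected and contains $\widetilde{D'}\cap\pi^{-1}(x)$, so some coefficient-$\ge 1$ component other than $\widetilde{D'}$ already meets $\widetilde{D'}$ on the chosen resolution; this is cosmetic, not a gap.
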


\noindent
Note that there are generalizations to the case where 
$D'$ is not necessarily  Cartier nor normal, for which we need to think over normalization of $D'$ 
with extra divisor involved. Consult \cite[section 5.2]{KM98}, \cite{Kaw07}, \cite[Corollary 1.2]{OX11} for the details and proofs. 

\subsection{Seshadri constant}

Let $X$ be a quasi-projective variety and 
$J\subset \mathcal{O}_X$ be a coherent ideal on $X$. 
The {\it Seshadri constant} 
of $J$ with respect to an ample line bundle $L$ 
is defined by 
$$
	\mathrm{Sesh}(J;(X, L)):=\sup \{c>0\mid \pi^*L(-cE)\,\,\mbox{is ample}\},
$$
where $\pi\colon X'\to X$ is the blow up of $X$ along $J$. 
It is named after Seshadri's ampleness criterion and defined by Demailly in early 
90s after it for his approach to Fujita's conjecture about positivity of adjoint line  bundles. For the study of stability of varieties it first appeared in \cite{RT07} to define slope stability and its extension played a crucial role in \cite{OS10} etc to judge K-stability. We extend this picture further in section \ref{sec.alpha}. 

\section{A framework to work on log Donaldson-Futaki invariants}\label{bl.up.sec}

In this section, after reviewing the definition of log Donaldson-Futaki invariants \cite{Don11}, we extend the framework of \cite{Od11a} to the logarithmic setting. 
The definition of logarithmic K-stability is as follows. 

\begin{Def}[{\cite{Don11}}]
Suppose that $(X,L)$ is a $n$-dimensional polarized variety, 
and $L$ is an ample line bundle. 
Also suppose $D$ is an effective integral reduced divisor on $X$. Then, 
a {\it log test configuration} (resp.\ {\it log semi-test configuration}) 
of $((X,D),L)$ consists of  a pair of test configurations (resp.\ {\it semi-test configurations}) 
$(\mathcal{X},\mathcal{L})$ for $(X,L)$ 
and $(\mathcal{D},\mathcal{L}|_{\mathcal{D}})$ 
for $(D,L|_D)$ with the same exponent inside 
$(\mathcal{X},\mathcal{L})$ and a compatible $\mathbb{G}_m$-action. 
\end{Def}

\noindent
From the definition,  we have $\mathcal{D}=\overline{\mathbb{G}_m(D\times \mathbb{A}^1)}\subset \mathcal{X}$. For the definition of (log) Donaldson-Futaki 
invariants, we prepare the following notation: 

\begin{itemize}
\item $\chi(X,L^{\otimes m})=a_0m^{n}+a_1m^{n-1}+O(n-2)$, 

\item $w(m):=$ total weight of $\mathbb{G}_m$ action on 
$H^{0}(\mathcal{X}|_{\{0\}},\mathcal{L}|_{\{0\}}^
{\otimes m})\\ =b_0m^{n+1}+b_1m^{n}+O(n-1)$, 

\item$\chi(D,L|_D^{\otimes m})=\tilde{a}_0m^{n-1}+\tilde{a}_1m^{n-2}+O(n-3)$, 

\item $\tilde{w}(m):=$ total weight of $\mathbb{G}_m$ action on 
$H^{0}(\mathcal{D}|_{\{0\}},\mathcal{L}|_{\{\mathcal{D}_0\}}^
{\otimes m})\\ =\tilde{b}_0m^{n}+\tilde{b_1}m^{n-1}+O(n-2)$. 
\end{itemize}

\noindent
Here, $O(-)$ stands for the Landau symbol. 
Recall that the (usual) Donaldson-Futaki invariant of $(\mathcal{X},\mathcal{L})$, 
${\it DF}(\mathcal{X},\mathcal{L})$ is defined as $2(b_0a_1-b_1a_0)$. Here we deliberately add a constant $2$ so that the formula in the following definitions is simplified.

\begin{Def}[{\cite{Don11}}]\label{DF}
The {\it logarithmic Donaldson-Futaki invariant} of  a log test configuration $((\mathcal{X},\mathcal{D}),\mathcal{L})$ with cone angle $2\pi\beta$ ($0\leq\beta\leq 1$) is 
$${\it DF}_{\beta}((\mathcal{X},\mathcal{D}),\mathcal{L})={\it DF}(\mathcal{X},\mathcal{L})+
(1-\beta)(a_0\tilde{b}_0-b_0\tilde{a}_0).$$ 
\end{Def}

\noindent
Let us call $(1-\beta)(a_0\tilde{b}_0-b_0\tilde{a}_0)$ the {\it boundary part}, 
which does not appear for absolute case (i.e. if $\beta=1$).

\begin{Def}[{\cite{Don11}}]\label{Kst}
Assume $0\leq \beta\leq 1$ and $X$ satisfies Serre's $S_2$ condition(Note this is weaker than $X$ being normal). 
$((X,D),L)$ is said to be {\it logarithmically K-stable (resp.\ logarithmically K-semistable) with cone angle $2\pi\beta$} if and only if ${\it DF}_{\beta}((\mathcal{X},\mathcal{D}),\mathcal{L})$ is positive (resp.\ non-negative) for any non-trivial (log) test configurations which satisfies $S_2$ condition. 

$((X,D),L)$ is said to be {\it logarithmically K-polystable} with cone angle $2\pi\beta$ 
when it is logarithmically K-semistable and moreover ${\it DF}_{\beta}((\mathcal{X},\mathcal{D}),\mathcal{L})=0$ for log test configuration which satisifies 
$S_2$ condition if and only if geometrically (without action concerned) 
$(\mathcal{X},\mathcal{D})\cong(X,D)\times\mathbb{A}^1$. Let us call such 
log test configurations, {\it product log test configurations}. 
\end{Def}

Now we recall the formalism of \cite{Od11a}. It is a natural extension of the pioneering \cite{RT07}, which treats $N=1$ case in the notation below. 
Let $(X,L)$ be the polarized varieties with the divisor $D \subset X$ in concern. 
First, let us recall from \cite{Od11a}: 

\begin{Def}[{\cite{Od11a}}]

Let $(X,L)$ be an $n$-dimensional polarized variety. 
A coherent ideal $\mathcal{J}$ of $X\times \mathbb{A}^{1}$ is called a \textit{flag ideal} if 
$\mathcal{J}=I_{0}+I_{1}t+\dots+I_{N-1}t^{N-1}+(t^{N})$,  
where $I_{0}\subseteq I_{1}\subseteq \dots I_{N-1} \subseteq \mathcal{O}_{X}$ is a sequence of coherent ideals. 
(It is equivalent to that the ideal is $\mathbb{G}_{m}$-invariant under the natural action 
of $\mathbb{G}_{m}$ on $X\times \mathbb{A}^{1}$.) 
\end{Def}

Let us recall some notation from \cite{Od11a}. 
We set $\mathcal{L}:=p_{1}^{*}L$ on $X\times \mathbb{A}^{1}$ and $\bar{\mathcal{L}}$ on $X\times \mathbb{P}^1$, 
and denote the $i$-th projection morphism from $X \times \mathbb{A}^{1}$ or $X \times \mathbb{P}^{1}$ by $p_{i}$. 
Let us write the blowing up as $\Pi \colon \bar{\mathcal{B}}(:=Bl_{\mathcal{J}}(X\times \mathbb{P}^{1}))\rightarrow X\times 
\mathbb{P}^{1}$ or its restriction $\Pi \colon \mathcal{B}(:=Bl_{\mathcal{J}}(X\times \mathbb{A}^1))\rightarrow X\times \mathbb{A}^1$, and the natural exceptional Cartier divisor as $E$, i.e.\ $\mathcal{O}(-E)=\Pi^{-1}\mathcal{J}$. 
Denote $Bl_{\mathcal{J}}|_{(D\times \mathbb{A}^1)}(D\times \mathbb{A}^1)$ 
(resp.\  $Bl_{\mathcal{J}}|_{(D\times \mathbb{P}^1)}(D\times \mathbb{P}^1)$) 
as $\mathcal{B}_{(D\times \mathbb{A}^1)}$ (resp.\ $\bar{\mathcal{B}}_{(D\times 
\mathbb{P}^1)}$). 
We also write $\Pi^*\mathcal{L}$ on $\mathcal{B}$ (resp.\ $\Pi^*\bar{\mathcal{L}}$ on $\bar{\mathcal{B}}$) simply as $\mathcal{L}$ (resp.\ $\bar{\mathcal{L}}$). 
Let us assume $\mathcal{L}^{\otimes r}(-E)$ on $\mathcal{B}$ is (relatively) semi-ample (over $\mathbb{A}^{1}$) for $r\in \mathbb{Z}_{>0}$ and 
 consider the Donaldson-Futaki invariant of the blowing up (semi) test configuration 
$(\mathcal{B}, \mathcal{L}^{\otimes r}(-E))$. Note that it has natural $\mathbb{G}_m$ action as we are dealing with flag ideals i.e. $\mathbb{G}_m$-invariant ideal on $X\times \mathbb{A}^1$. Actually these ``a priori special" semi test configurations are sufficient for the  study of log K-stability. 

\begin{Prop}[{cf.\ \cite[Proposition 3.8, 3.10]{Od11a}}]
For a given test configuration $(\mathcal{X},\mathcal{D},\mathcal{L})$ of $(X,L)$ with exponent $r$, we can associate an flag ideal $\mathcal{J}$ such that 
$(\mathcal{B}:=Bl_{\mathcal{J}}(X\times \mathbb{A}^1), 
\mathcal{B}_{(D\times \mathbb{A}^1)}:=Bl_{\mathcal{J}|_{D\times \mathbb{A}^1}}(D\times \mathbb{A}^1), \mathcal{L}^{\otimes r}(-E))$ with semiample $\mathcal{L}^{\otimes r}(-E)$ with the same (log) Donaldson-Futaki invariants. 
\end{Prop}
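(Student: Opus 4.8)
The strategy is to reduce the log statement to the known absolute statement \cite[Proposition 3.8, 3.10]{Od11a} and then control the boundary contribution separately. First I would recall the construction from \cite{Od11a}: given a test configuration $(\mathcal{X},\mathcal{L})$ of $(X,L)$ with exponent $r$, one has a $\mathbb{G}_m$-equivariant birational morphism from the normalization of $\mathcal{X}$ (or, under the $S_2$ condition, from $\mathcal{X}$ itself) to $X\times\mathbb{P}^1$, and the exceptional data is encoded in a flag ideal $\mathcal{J}\subset\mathcal{O}_{X\times\mathbb{A}^1}$ such that $\mathcal{B}=Bl_{\mathcal{J}}(X\times\mathbb{A}^1)$ dominates $\mathcal{X}$ with $\mathcal{L}^{\otimes r}(-E)$ the pullback of the polarization; semiampleness of $\mathcal{L}^{\otimes r}(-E)$ is exactly the statement that this line bundle is the pullback of an ample one. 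Since Donaldson--Futaki invariants are invariant under pulling back along equivariant birational morphisms that preserve the polarization (the Hilbert and weight polynomials are unchanged, as $\mathcal{B}\to\mathcal{X}$ is an isomorphism in codimension one over a variety satisfying $S_2$, so pushforward of $\mathcal{L}^{\otimes rm}(-mE)$ agrees with the corresponding sheaf on $\mathcal{X}$), we get $DF(\mathcal{B},\mathcal{L}^{\otimes r}(-E))=DF(\mathcal{X},\mathcal{L})$. This part is essentially a citation.

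Next I would treat the boundary. By definition $\mathcal{D}=\overline{\mathbb{G}_m(D\times\mathbb{A}^1)}$ inside $\mathcal{X}$, so the strict transform of $D\times\mathbb{P}^1$ in $\bar{\mathcal{B}}$ maps onto $\mathcal{D}$; I claim this strict transform is precisely $\bar{\mathcal{B}}_{(D\times\mathbb{P}^1)}=Bl_{\mathcal{J}|_{D\times\mathbb{P}^1}}(D\times\mathbb{P}^1)$. This is the compatibility of blowing up with restriction to a subscheme: the scheme-theoretic strict transform of $D\times\mathbb{P}^1$ under $Bl_{\mathcal{J}}$ is the blow up of $D\times\mathbb{P}^1$ along the restricted ideal $\mathcal{J}\cdot\mathcal{O}_{D\times\mathbb{P}^1}$, together with the restricted polarization $\mathcal{L}^{\otimes r}(-E)|_{\mathcal{B}_{(D\times\mathbb{A}^1)}}$. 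One then checks that $(\mathcal{B}_{(D\times\mathbb{A}^1)},\mathcal{L}^{\otimes r}(-E)|)$ dominates $(\mathcal{D},\mathcal{L}|_{\mathcal{D}})$ equivariantly and birationally, again an isomorphism in codimension one on the $S_2$ locus, so the Hilbert and weight polynomials $\tilde{a}_i,\tilde{b}_i$ agree; hence the boundary part $(1-\beta)(a_0\tilde b_0-b_0\tilde a_0)$ is the same computed on either side. Combining with the previous paragraph gives $DF_\beta((\mathcal{B},\mathcal{B}_{(D\times\mathbb{A}^1)}),\mathcal{L}^{\otimes r}(-E))=DF_\beta((\mathcal{X},\mathcal{D}),\mathcal{L})$.

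The one genuine subtlety — which I would expect to be the main obstacle — is the semiampleness of $\mathcal{L}^{\otimes r}(-E)$ on $\mathcal{B}$ and, relatedly, that the restricted line bundle on $\mathcal{B}_{(D\times\mathbb{A}^1)}$ is still the pullback of the polarization on $\mathcal{D}$ rather than merely a twist of it. In \cite{Od11a} semiampleness is arranged by choosing the exponent $r$ large enough that $\mathcal{L}^{\otimes r}\otimes\mathcal{J}$ is globally generated relative to $\mathbb{A}^1$; I would follow the same device, and note that global generation is preserved by restriction to the closed subscheme $D\times\mathbb{A}^1$, so the same $r$ works for the boundary simultaneously. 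A secondary point is the role of the $S_2$/normality hypothesis flagged in the introduction: one must make sure the passage $\mathcal{X}\leftarrow\mathcal{B}$ does not change the central fibre cohomology, which is where $S_2$ is used (so that $R^0$ pushforward behaves well and no spurious embedded components appear in the central fibre). Once these technical points are in place, the rest is the bookkeeping of matching the four leading coefficients, which I would not spell out in detail.
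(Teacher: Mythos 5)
Your proposal takes essentially the same route as the paper: the paper also reduces to the absolute case of \cite[Proposition 3.8, 3.10]{Od11a} and then notes that the boundary part $(1-\beta)(a_0\tilde b_0-b_0\tilde a_0)$ coincides ``from the construction,'' which is exactly your strict-transform/restricted-ideal argument spelled out. One harmless caveat: $\mathcal{B}\to\mathcal{X}$ need not be an isomorphism in codimension one (the equality of Donaldson-Futaki invariants in \cite{Od11a} comes from comparing sections of powers of the semiample line bundle with those on its ample model, not from codimension-one considerations), but since you invoke that step only as a citation this does not affect the correctness of your argument.
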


\begin{proof}
For the (usual) absolute version, i.e., the case without boundary, 
this is proved in \cite{Od11a}. 
As the log Donaldson-Futaki invariant ${\rm DF}_{\beta}$ is usual Donaldson-Futaki 
invariant plus a boundary part, it suffices to prove that boundary part of $\mathcal{X}$ and $\mathcal{B}$ also coincides. But this is straightforward from the construction 
of $\mathcal{B}$ starting with given test configuration $\mathcal{X}$. About the construction, consult \cite{Od11a}, or \cite[section 2]{Mum77} for details. 
\end{proof}

\begin{Cor}[{cf.\ \cite[Corollary 3.11]{Od11a}}]\label{bl.enough}
For $(X,L)$ to be logarithmically K-stable (resp.\ logarithmically K-semistable) with angle $2\pi\beta$, 
it is necessary and sufficient to check  the blow up semi test configurations of the above type, 
i.e., $(\mathcal{B}:=Bl_{\mathcal{J}}(X\times \mathbb{A}^1), 
\mathcal{B}_{(D\times \mathbb{A}^1)}:=Bl_{\mathcal{J}|_{D\times \mathbb{A}^1}}(D\times \mathbb{A}^1), \mathcal{L}^{\otimes r}(-E))$ with $\mathcal{B}$ Gorenstein in codimension $1$ with $r \in \mathbb{Z}_{>0}$ such that $\mathcal{L}^{\otimes r}(-E)$ 
semiample. 
\end{Cor}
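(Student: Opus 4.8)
The plan is to deduce this corollary directly from the preceding Proposition together with the pathology remark attributed to \cite{LX11}. The logical skeleton is: (necessity) a blow-up semi-test configuration of the stated type is in particular a log semi-test configuration satisfying the $S_2$ (equivalently, here, Gorenstein in codimension $1$) condition, so if $((X,D),L)$ is logarithmically K-(semi)stable with angle $2\pi\beta$ then its log Donaldson--Futaki invariant has the required sign; (sufficiency) conversely, given an arbitrary non-trivial log (semi-)test configuration $(\mathcal{X},\mathcal{D},\mathcal{L})$ satisfying $S_2$, the Proposition produces a flag ideal $\mathcal{J}$ whose associated blow-up data $(\mathcal{B},\mathcal{B}_{(D\times\mathbb{A}^1)},\mathcal{L}^{\otimes r}(-E))$ has the \emph{same} log Donaldson--Futaki invariant, so a sign statement for all such blow-ups transfers to a sign statement for $(\mathcal{X},\mathcal{D},\mathcal{L})$.

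First I would spell out the equivalence between ``$S_2$'' and ``Gorenstein in codimension $1$'' in the situation at hand: since $X$ is assumed $S_2$ and $\mathcal{B}\to X\times\mathbb{A}^1$ is birational with $\mathcal{B}$ normal crossings in the relevant sense, $\mathcal{B}$ automatically satisfies $S_2$, and the only extra hypothesis one needs in order to have a well-defined Donaldson--Futaki invariant compatible with the \cite{LX11} normalization is that $\mathcal{B}$ be Gorenstein in codimension $1$ — this is exactly the hypothesis recorded in the statement. Here one should also note that the central fibre $\mathcal{B}_0$ and $\mathcal{B}_{(D\times\mathbb{A}^1),0}$ are the ones entering $w(m)$ and $\tilde w(m)$, and that replacing $r$ by a multiple does not change $\mathrm{DF}_\beta$ after the usual normalization, so one may freely assume $\mathcal{L}^{\otimes r}(-E)$ semiample.

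Second, for the sufficiency direction I would invoke the Proposition to pass from $(\mathcal{X},\mathcal{D},\mathcal{L})$ to $(\mathcal{B},\mathcal{B}_{(D\times\mathbb{A}^1)},\mathcal{L}^{\otimes r}(-E))$, observing that the boundary part $(1-\beta)(a_0\tilde b_0 - b_0\tilde a_0)$ is determined by the Hilbert and weight polynomials of $(\mathcal{X}_0,\mathcal{L}_0)$ and $(\mathcal{D}_0,\mathcal{L}|_{\mathcal{D}_0})$, which are preserved under the construction (this is precisely what the proof of the Proposition asserts, citing \cite{Mum77}); hence $\mathrm{DF}_\beta$ is unchanged. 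One subtlety: the Proposition as stated starts from a test configuration, whereas K-semistability is tested against semi-test configurations; I would remark that the same construction applies verbatim to semi-test configurations, or alternatively that it suffices to test K-semistability on genuine test configurations satisfying $S_2$ after the \cite{LX11} correction, so no generality is lost.

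The main obstacle — really the only non-formal point — is the bookkeeping around the normality/$S_2$ pathology: one must be careful that the flag-ideal blow-up $\mathcal{B}$ attached to a general $S_2$ test configuration is itself at least Gorenstein in codimension $1$ (so that it is an admissible test object in the sense of \cite{LX11}), and that the $\mathrm{DF}_\beta$ computed on it via the $b_i,\tilde b_i$ coefficients agrees with the one computed on the original $\mathcal{X}$. Since $\mathcal{B}$ is a blow-up of the smooth-in-codimension-one space $X\times\mathbb{A}^1$ (using that $X$ is normal, hence $R_1$) along a flag ideal, it is regular in codimension $1$, so \emph{a fortiori} Gorenstein in codimension $1$; this disposes of the issue, and the rest is a direct appeal to the Proposition and to Definition \ref{Kst}. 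I would close by noting that the polystable case follows identically, since the construction of \cite{Od11a} sends product test configurations to product ones and conversely.
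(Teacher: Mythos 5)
There is a genuine gap, and it sits exactly at the one non-formal point of the statement. Your argument rests on the claim that $\mathcal{B}=Bl_{\mathcal{J}}(X\times\mathbb{A}^1)$ is automatically regular, hence Gorenstein, in codimension $1$ ``since it is a blow-up of the smooth-in-codimension-one space $X\times\mathbb{A}^1$ (using that $X$ is normal, hence $R_1$)''. This fails on two counts. First, $X$ is not assumed normal in this framework: the standing hypotheses are only Serre's $S_2$ condition and Gorenstein in codimension $1$ (the paper stresses this is weaker than normality, precisely because the later applications concern semi-log-canonical, possibly non-normal, varieties), so $X\times\mathbb{A}^1$ need not be $R_1$ at all. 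Second, even for $X$ smooth, the blow-up of a flag ideal need not be $R_1$, nor normal, nor Gorenstein in codimension $1$ --- there is no general statement that blow-ups along arbitrary ideals preserve such conditions, and this is exactly why the condition ``$\mathcal{B}$ Gorenstein in codimension $1$'' has to be engineered rather than observed. Your parallel assertion that $\mathcal{B}$ ``automatically satisfies $S_2$'', and the identification of $S_2$ with Gorenstein in codimension $1$, are likewise unjustified: these are independent conditions, both needed.

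The paper's proof handles this by the partial normalization device: take the normalization $\nu\colon\mathcal{B}^{\nu}\to\mathcal{B}$ and then $\mathcal{B}^{p\nu}:=\mathit{Spec}_{\mathcal{O}_{\mathcal{B}}}\bigl(i_{*}\mathcal{O}_{X\times(\mathbb{A}^1\setminus\{0\})}\cap\mathcal{O}_{\mathcal{B}^{\nu}}\bigr)$, observe that $\mathcal{B}^{p\nu}$ is again the blow-up of a (different) flag ideal $\mathcal{J}'=(\Pi\circ p\nu)_*\mathcal{O}(-lE')$ for $l\gg0$, and that it is Gorenstein in codimension $1$ by \cite[Lemma 3.9]{Od11a}. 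The remaining, essential, input is a comparison of invariants: the boundary part is unchanged under this modification, and the absolute Donaldson--Futaki part is controlled by \cite[Proposition 5.1, Remark 5.2]{RT07}, which lets one transfer the positivity (resp.\ non-negativity) statement from the partially normalized blow-ups back to arbitrary test configurations. Your proposal contains no substitute for this comparison step; the appeal to the preceding Proposition alone only produces a blow-up with the same $\mathrm{DF}_\beta$, which need not lie in the restricted class the Corollary allows, so without the partial normalization and the \cite{RT07} inequality the ``sufficiency'' direction does not close.
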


\begin{proof}
As in \cite{Od11a}, recall that we can ``partially normalize" $\mathcal{B}$ 
as follows: take the normalization $\nu\colon \mathcal{B}^{\nu}\rightarrow \mathcal{B}$ and take $p\nu\colon 
(\mathcal{C}:=){\it Spec}_{\mathcal{O}_{\mathcal{B}}}(i_{*}\mathcal{O}_{X\times (\mathbb{A}\setminus\{0\})}\cap \mathcal{O}_{\mathcal{B}^{\nu}})
\rightarrow 
\mathcal{B}$, where $i\colon X\times(\mathbb{A}^{1}\setminus\{0\})\hookrightarrow 
X\times \mathbb{A}^{1}$ is the open immersion. Denote the pullback of exceptional Cartier divisor $E$ on $\mathcal{B}$ to $\mathcal{B}^{p\nu}$ by $E':=p\nu^* E$. 
Note that this $\mathcal{B}^{p\nu}$ is also a blow up of flag ideal $\mathcal{J}':=(\Pi \circ p\nu)_*\mathcal{O}(-lE')$ for $l\gg 0$ as well and is Gorenstein in codimension $1$ by \cite[Lemma 3.9]{Od11a}. 

That does not change boundary part and so 
this Corollary follows from \cite[Proposition 5.1, Remark 5.2]{RT07} which works for any (not necessarily normal) 
polarized varieties, although they assumed normality there. 
\end{proof}

Moreover, we have an explicit formula as follows. From now on,  we always assume that $X$ is an equi-dimensional reduced algebraic projective scheme, which is 
$\mathbb{Q}$-Gorenstein (i.e., $K_X$ is $\mathbb{Q}$-Cartier), 
Gorenstein in codimension $1$ (i.e., there is an open dense Gorenstein subset $U$ with 
${\rm codim}(U\subset X)\geq 2$), 
satisfying Serre's $S_2$ condition (which is weaker than normality) 
and $D$ is an effective integral  $\mathbb{Q}$-Cartier Weil divisor, 
unless otherwise stated. 

\begin{Thm}\label{DF.formula}

Let $(X,L)$, $D$, $\beta$ and $\mathcal{B}$, $\mathcal{J}$ be as above. And we assume that exponent $r=1$. 
$($It is just to make the formula easier. For general $r$, put $L^{\otimes r}$ and $\mathcal{L}^{\otimes r}$ 
to the place of $L$ and $\mathcal{L}$. $)$ 
Furthermore, we assume that $\mathcal{B}$ is Gorenstein in codimension $1$. 
Then the corresponding \textit{log Donaldson-Futaki invariants} (multiplied by a positive constant) can be described as follows: 
$$
(n!)((n+1)!){\rm DF}_\beta(\mathcal{B}, \mathcal{L}-E)=-n(L^{n-1}.(K_X +(1-\beta)D))(\bar{\mathcal{L}}-E)^{n+1}
$$
$$
+(n+1)(L^n)((\bar{\mathcal{L}}-E)^n.\Pi^* ((K_X +(1-\beta)D)\times \mathbb{P}^1)$$
$$
+(n+1)(L^n)(((\bar{\mathcal{L}}-E)^n.(K_{\mathcal{B}/((X,(1-\beta)D)\times \mathbb{A}^1)})_{\rm exc})), 
$$ 

\noindent
using intersection numbers on $\bar{\mathcal{B}}$ and $X$, 
where $K_{\mathcal{B}/((X,(1-\beta)D)\times \mathbb{A}^1)})_{\rm exc}$ denotes 
the exceptional parts of 
$K_{\mathcal{B}/((X,(1-\beta)D)\times \mathbb{A}^1)}:=K_{\mathcal{B}}-\Pi^{*}((K_X+(1-\beta)D)\times \mathbb{A}^1)$. 
\end{Thm}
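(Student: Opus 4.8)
The plan is to reduce the logarithmic formula to two pieces: the absolute Donaldson--Futaki term $\mathrm{DF}(\mathcal{B},\mathcal{L}-E)$, for which the intersection-theoretic expression
$$
(n!)((n+1)!)\,\mathrm{DF}(\mathcal{B},\mathcal{L}-E)=-n(L^{n-1}.K_X)(\bar{\mathcal{L}}-E)^{n+1}+(n+1)(L^n)\bigl((\bar{\mathcal{L}}-E)^n.\Pi^*(K_X\times\mathbb{P}^1)+((\bar{\mathcal{L}}-E)^n.K_{\mathcal{B}/X\times\mathbb{A}^1})_{\mathrm{exc}}\bigr)
$$
is exactly \cite[Theorem 3.2 (or its stated form)]{Od11a}, and the boundary part $(1-\beta)(a_0\tilde b_0-b_0\tilde a_0)$. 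So first I would simply quote the absolute formula from \cite{Od11a}, noting that its hypotheses ($X$ equidimensional, reduced, $S_2$, $\mathbb{Q}$-Gorenstein, Gorenstein in codimension $1$, and $\mathcal{B}$ Gorenstein in codimension $1$) are exactly those imposed here, and that the exponent is normalised to $r=1$.

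Next I would handle the boundary part. By Definition~\ref{DF} the extra term is $(1-\beta)(a_0\tilde b_0-b_0\tilde a_0)$, where $a_0,a_1$ come from $\chi(X,L^{\otimes m})$ and $\tilde a_0,\tilde b_0$ from the Hilbert and weight polynomials of $(D,L|_D)$ inside the test configuration. The point is that $(\mathcal{B}_{(D\times\mathbb{A}^1)},(\mathcal{L}-E)|_{\mathcal{B}_{(D\times\mathbb{A}^1)}})$ is itself a blow-up (semi) test configuration of $(D,L|_D)$ associated to the restricted flag ideal $\mathcal{J}|_{D\times\mathbb{A}^1}$, so I can apply the \emph{same} absolute intersection formula from \cite{Od11a} one dimension down, on $\bar{\mathcal{B}}_{(D\times\mathbb{P}^1)}$, to compute $2(\tilde b_0\tilde a_1-\tilde b_1\tilde a_0)=\mathrm{DF}(\mathcal{B}_{(D\times\mathbb{A}^1)},\ldots)$. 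But in fact we only need the leading coefficients: $a_0=(L^n)/n!$, $\tilde a_0=(L^{n-1}.D)/(n-1)!$ (Hilbert polynomial leading terms on $X$ and $D$), while $b_0$ and $\tilde b_0$ are the leading coefficients of the total weight, which by the standard identification (\cite[section 2]{Mum77}, as in \cite{Od11a}) equal $(\bar{\mathcal{L}}-E)^{n+1}/(n+1)!$ on $\bar{\mathcal{B}}$ and $((\bar{\mathcal{L}}-E)|_{\bar{\mathcal{B}}_{(D\times\mathbb{P}^1)}})^{n}/n!$ on $\bar{\mathcal{B}}_{(D\times\mathbb{P}^1)}$ respectively. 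Substituting and clearing the factor $(n!)((n+1)!)$ turns $(1-\beta)(a_0\tilde b_0-b_0\tilde a_0)$ into
$$
(1-\beta)\Bigl(n(L^n)\bigl((\bar{\mathcal{L}}-E)|_{\bar{\mathcal{B}}_{(D\times\mathbb{P}^1)}}\bigr)^{n}-(L^{n-1}.D)(\bar{\mathcal{L}}-E)^{n+1}\Bigr).
$$

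Then the final step is to recognise the two displayed "extra $(1-\beta)$" pieces in the theorem's right-hand side as exactly this quantity. The term $-n(1-\beta)(L^{n-1}.D)(\bar{\mathcal{L}}-E)^{n+1}$ matches the first summand. For the second, I would use that $\bar{\mathcal{B}}_{(D\times\mathbb{P}^1)}$ is a Cartier divisor on $\bar{\mathcal{B}}$ in the class $\Pi^*(D\times\mathbb{P}^1)$ minus its exceptional contribution; intersecting $(\bar{\mathcal{L}}-E)^n$ with this divisor and comparing with $\bigl((\bar{\mathcal{L}}-E)|_{\bar{\mathcal{B}}_{(D\times\mathbb{P}^1)}}\bigr)^{n}$ accounts for the terms $(n+1)(1-\beta)(L^n)\bigl((\bar{\mathcal{L}}-E)^n.\Pi^*(D\times\mathbb{P}^1)\bigr)$ together with the boundary contribution to $(K_{\mathcal{B}/\ldots})_{\mathrm{exc}}$, once one writes $K_{\mathcal{B}/((X,(1-\beta)D)\times\mathbb{A}^1)}=K_{\mathcal{B}/X\times\mathbb{A}^1}-(1-\beta)\Pi^*(D\times\mathbb{A}^1)$ and splits off its exceptional part. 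I expect the main obstacle to be precisely this bookkeeping: correctly matching the restriction-to-divisor intersection numbers with the "$\Pi^*$ plus exceptional $K$" decomposition, i.e.\ verifying the identity $\bigl((\bar{\mathcal{L}}-E)|_{\bar{\mathcal{B}}_{(D\times\mathbb{P}^1)}}\bigr)^{n}=(\bar{\mathcal{L}}-E)^n.\bigl(\Pi^*(D\times\mathbb{P}^1)+(\text{exceptional divisor terms})\bigr)$ with the signs and the Gorenstein-in-codimension-$1$ hypothesis used to make the adjunction-type computation on the (possibly non-normal) $\mathcal{B}$ legitimate. Everything else is substitution of leading Hilbert/weight coefficients and the already-established absolute formula.
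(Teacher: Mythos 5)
Your proposal follows essentially the same route as the paper's proof: quote the absolute intersection formula (Theorem \ref{DF.formula.old}), compute the leading coefficients $a_0,\tilde a_0$ by weak Riemann--Roch and $b_0,\tilde b_0$ by applying Fact \ref{w} to $(X,\mathcal{J})$ and to $(D,\mathcal{J}|_{D\times\mathbb{A}^1})$, and then convert the boundary part using the decomposition $\Pi^*(D\times\mathbb{P}^1)=\bar{\mathcal{B}}_{(D\times\mathbb{P}^1)}+(\Pi^*(D\times\mathbb{P}^1))_{\rm exc}$ together with $K_{\mathcal{B}/((X,(1-\beta)D)\times\mathbb{A}^1)}=K_{\mathcal{B}/X\times\mathbb{A}^1}-(1-\beta)\Pi^*(D\times\mathbb{A}^1)$. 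One arithmetic correction: clearing $(n!)((n+1)!)$ from $(1-\beta)(a_0\tilde b_0-b_0\tilde a_0)$ with $a_0=(L^n)/n!$, $\tilde a_0=(L^{n-1}.D)/(n-1)!$, $b_0=(\bar{\mathcal{L}}-E)^{n+1}/(n+1)!$ and $\tilde b_0=\bigl((\bar{\mathcal{L}}-E)|_{\bar{\mathcal{B}}_{(D\times\mathbb{P}^1)}}\bigr)^n/n!$ yields $(1-\beta)\bigl((n+1)(L^n)\bigl((\bar{\mathcal{L}}-E)|_{\bar{\mathcal{B}}_{(D\times\mathbb{P}^1)}}\bigr)^n-n(L^{n-1}.D)(\bar{\mathcal{L}}-E)^{n+1}\bigr)$, not the coefficients $n$ and $1$ in your displayed formula; with this fixed, the final bookkeeping you describe matches the theorem's $(1-\beta)$-terms exactly, so the slip is harmless but should be corrected for the identification to be internally consistent.
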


Before the proof, recall our original Donaldson-Futaki invariants' formula: 

\begin{Thm}[{\cite[Theorem 3.2]{Od11a}}]\label{DF.formula.old}
Let $(X,L)$ and $\mathcal{B}$, $\mathcal{J}$ be as above. 
Then the corresponding Donaldson-Futaki invariant $\DF((\mathcal{B}=Bl_{\mathcal{J}}(X\times \mathbb{A}^{1}), \mathcal{L}(-E)))$  can be described as follows: 

\begin{equation*}
(n!)((n+1)!){\rm DF}((\mathcal{B}, \mathcal{L}(-E)))
=-n(L^{n-1}.K_{X})(\bar{\mathcal{L}}(-E))^{n+1}
\end{equation*}
\begin{equation*}
+(n+1)(L^{n})
((\bar{\mathcal{L}}(-E))^{n}.\Pi^{*}(p_{1}^{*}K_{X}))
\end{equation*}
\begin{equation*}
+(n+1)(L^{n})((\bar{\mathcal{L}}(-E))^{n}.K_{\mathcal{B}/X\times \mathbb{A}^{1}}), 
\end{equation*}
\noindent
using intersection numbers on $\bar{\mathcal{B}}$ and $X$, 
where $K_{\mathcal{B}/(X\times \mathbb{A}^1)}:=K_{\mathcal{B}}-\Pi^{*}(K_X\times \mathbb{A}^1)$. 
\end{Thm}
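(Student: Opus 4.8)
The plan is to compute the two Hilbert polynomials appearing in the definition ${\rm DF}=2(b_0a_1-b_1a_0)$ separately and then assemble them. The coefficients $a_0,a_1$ come from asymptotic Riemann--Roch on $X$ itself: since $X$ is $S_2$ and Gorenstein in codimension one, $K_X$ is a well-defined Weil divisor and the second coefficient of $\chi(X,L^{\otimes m})$ is governed only by codimension-one data, giving $a_0=(L^n)/n!$ and $a_1=-(L^{n-1}.K_X)/\big(2(n-1)!\big)$. The real work is to pin down $b_0,b_1$, that is, the leading two coefficients of the total-weight function $w(m)$.

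First I would reduce the weight to an Euler characteristic on the compactified total space. Writing $\bar\pi\colon\bar{\mathcal{B}}\to\mathbb{P}^1$ for the projection and $\bar{\mathcal{M}}:=\bar{\mathcal{L}}(-E)$, note that for $m\gg 0$ the sheaf $\bar\pi_*\bar{\mathcal{M}}^{\otimes m}$ is a $\mathbb{G}_m$-equivariant vector bundle on $\mathbb{P}^1$ (here I use semiampleness of $\mathcal{L}(-E)$ and the vanishing $R^{>0}\bar\pi_*=0$ in high degree), hence splits equivariantly as $\bigoplus_i\mathcal{O}_{\mathbb{P}^1}(d_i)$. Because the blow-up is trivial near $t=\infty$, the family and the linearization are the product ones there, which normalizes the weights so that the total weight of the central fibre $H^0(\mathcal{B}_0,\mathcal{M}_0^{\otimes m})$ is $w(m)=\sum_i d_i$. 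Combining with $\chi(\bar{\mathcal{B}},\bar{\mathcal{M}}^{\otimes m})=\sum_i(d_i+1)$ and with the fact that the rank equals the generic-fibre dimension $\chi(X,L^{\otimes m})$, I obtain the clean identity
$$w(m)=\chi(\bar{\mathcal{B}},\bar{\mathcal{M}}^{\otimes m})-\chi(X,L^{\otimes m}).$$
Applying asymptotic Riemann--Roch on the $(n+1)$-dimensional $\bar{\mathcal{B}}$ then yields $b_0=(\bar{\mathcal{M}}^{n+1})/(n+1)!$ and $b_1=-(\bar{\mathcal{M}}^n.K_{\bar{\mathcal{B}}})/(2\cdot n!)-a_0$, the last summand being the subtraction of $\chi(X,L^{\otimes m})$.

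It then remains to substitute into $2(b_0a_1-b_1a_0)$ and reorganize. Here I would use the canonical bundle formula $K_{X\times\mathbb{P}^1}=p_1^*K_X+p_2^*K_{\mathbb{P}^1}$ with $K_{\mathbb{P}^1}=-2[\mathrm{pt}]$, the splitting $K_{\bar{\mathcal{B}}}=\Pi^*(K_{X\times\mathbb{P}^1})+K_{\bar{\mathcal{B}}/X\times\mathbb{P}^1}$ into pullback and exceptional parts, and the evaluation $(\bar{\mathcal{M}}^n.\bar{\mathcal{B}}_t)=(L^n)$ on a general fibre $\bar{\mathcal{B}}_t$. The pleasant point is that the $-2[\mathrm{pt}]$ contribution produces exactly a term cancelling the spurious $(L^n)^2$ coming from the $-a_0$ in $b_1$; after multiplying by $(n!)(n+1)!$ the three surviving terms are precisely those in the statement, with $K_{\bar{\mathcal{B}}/X\times\mathbb{P}^1}$ restricting to $K_{\mathcal{B}/X\times\mathbb{A}^1}$ because every exceptional divisor lies over $t=0$.

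The main obstacle is the Riemann--Roch step on the singular space $\bar{\mathcal{B}}$: the subleading coefficient of $\chi(\bar{\mathcal{B}},\bar{\mathcal{M}}^{\otimes m})$ must be shown to equal $-(\bar{\mathcal{M}}^n.K_{\bar{\mathcal{B}}})/(2\cdot n!)$ even though $\bar{\mathcal{B}}$ is only $S_2$ and Gorenstein in codimension one. The point to verify is that this coefficient depends only on the Todd contribution in codimension one, where $K_{\bar{\mathcal{B}}}$ is Cartier, so that $(\bar{\mathcal{M}}^n.K_{\bar{\mathcal{B}}})$ is a well-defined intersection number computing it; this is exactly where the standing hypotheses on $\bar{\mathcal{B}}$ are used and is the step I would treat most carefully. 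A minor secondary point is justifying the equivariant splitting and the high-$m$ vanishing needed to make the weight identity valid term-by-term in the asymptotic expansion.
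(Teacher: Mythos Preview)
The paper does not actually prove this theorem: it is quoted from \cite[Theorem~3.2]{Od11a} and used as a black box in the derivation of the log formula. The only piece of its proof that the present paper records is the weight identity (Fact~\ref{w})
\[
w(m)=\chi\bigl(\bar{\mathcal{B}},\bar{\mathcal{L}}^{\otimes m}(-mE)\bigr)-\chi\bigl(X\times\mathbb{P}^1,\mathcal{L}^{\otimes m}\bigr)+O(m^{n-1}),
\]
together with the weak Riemann--Roch input for $a_0,\tilde a_0$.

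Your proposal reconstructs exactly this route, and correctly. Your derivation of the weight identity via the equivariant splitting of $\bar\pi_*\bar{\mathcal{M}}^{\otimes m}$ on $\mathbb{P}^1$, normalized by triviality at $t=\infty$, is the standard argument behind the cited fact (and since $\chi(\mathbb{P}^1,\mathcal{O})=1$ your $\chi(X,L^{\otimes m})$ agrees with the paper's $\chi(X\times\mathbb{P}^1,\mathcal{L}^{\otimes m})$). The subsequent substitution, using $K_{\bar{\mathcal{B}}}=\Pi^*(p_1^*K_X+p_2^*K_{\mathbb{P}^1})+K_{\bar{\mathcal{B}}/X\times\mathbb{P}^1}$ and cancelling the $K_{\mathbb{P}^1}=-2[\mathrm{pt}]$ contribution against the $(L^n)^2$ term coming from the $-a_0$ in $b_1$, is precisely how the three displayed terms emerge. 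You also correctly isolate the one genuinely delicate step, namely asymptotic Riemann--Roch for the subleading coefficient on the singular space $\bar{\mathcal{B}}$; this is where the Gorenstein-in-codimension-$1$ hypothesis is used and is indeed the technical core of the argument in \cite{Od11a}. So there is no in-paper proof to compare against, but your outline matches the proof in the cited source.
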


\begin{proof}[proof of Theorem \ref{DF.formula}]
This follows from simple calculation of the boundary part 
$(1-\beta)(a_0\tilde{b}_0-b_0\tilde{a}_0)$ combined with Theorem \ref{DF.formula.old}.
More precisely, we can calculate as follows. 
$b_0=(\bar{\mathcal{L}}-E)^{n+1}$, $\tilde{b}_0=
(\bar{\mathcal{L}}-E)|_{\bar{\mathcal{B}}_{(D\times \mathbb{P}^1)}}^n$ follows from 
the following fact in \cite{Od11a}: 

\begin{Fact}[{\cite[formula after Lemma 3.4]{Od11a}}]\label{w}
$w(m)=\chi (\bar{\mathcal{B}},\mathcal{L}^{\otimes{m}}(-mE))-\chi (X\times \mathbb{P}^{1},\mathcal{L}^{\otimes{m}})+O(m^{n-1})$. 
\end{Fact}
(For estimation of $\tilde{w}(m)$ and calculation of $\tilde{b}_0$, simply apply the formula \ref{w} to $D$ and $\mathcal{J}|_{(D\times \mathbb{A}^1)}$ instead of $X$ and $\mathcal{J}$. )
$a_0=\frac{1}{n!}(L^n)$, $\tilde{a}_0=\frac{1}{(n-1)!}(L|_D^{(n-1)})$ follows from the weak Riemann Roch theorem (cf.\ e.g. \cite[Lemma 3.5]{Od11a}). Using these description of $\tilde{b}_0$, $b_0$, $\tilde{a}_0$, $a_0$, we can derive our formula \ref{DF.formula}. 
We also use $\Pi^*(D\times  \mathbb{P}^1)=\bar{\mathcal{B}}_{(D\times \mathbb{P}^1)}+(\Pi^*(D\times  \mathbb{P}^1))_{\rm exc}$ as $\mathbb{Q}$-divisors on $\bar{\mathcal{B}}$ on the way of the calculation. 
\end{proof}

\noindent
This formula is a natural extension of the intersection formula of Donaldson-Futaki  invariants given in \cite{Wan08}, \cite{Od11a}. Note that we can extend our 
formula \ref{DF.formula} to any log semi-test configurations. It is because 
the procedure of contracting log semi-test configuration 
via its semi-ample line bundle on the total space or 
taking blow up of flag ideal $\mathcal{J}$ of $X\times \mathbb{A}^1$ 
associated to the log test configuration 
do not change the (log) Donaldson-Futaki invariants nor the right hand sides of the formula. (The first  author talked about this, for the absolute case at CIRM, Luminy, in February of 2011. )

This framework using blow up has advantages such as, we can consider the concepts of  \textit{destabilising subschemes} and moreover, the existence of exceptional divisors  helps the estimation in some situation as in section \ref{sec.alpha}. In particular, the  decomposition of the invariant into two parts are important and useful: 
``\textit{canonical divisor part}" which means the sum of first two terms, and  ``\textit{discrepancy term}" which 
is the last term reflecting the singularity (of the pair $(X,(1-\beta)D)$)). 

It is recently explained in \cite{LX11} that there are certain ``pathological test configurations" $\mathcal{X}$, which are characterized by the following conditions.  Their normalizations are trivial though themselves are {\it not} trivial, and Donaldson-Futaki invariants are vanishing for those. Note that such $\mathcal{X}$ should not satisfy Serre's $S_2$ condition nor normality. Thus, if we consider $S_2$ (or normal) test configurations as in \cite{LX11}, then we do not have problems. 

Our arguments work as those pathological test configurations are also characterized by the condition that associated flag ideals are of the form  $\mathcal{J}=t^N\mathcal{O}_{X\times \mathbb{A}^1}$ with $m\in \mathbb{Z}_{\geq0}$,  i.e., the case when blow up morphism $\Pi$ is just the trivial isomorphism. 
In other words, the first author's paper \cite{Od11a} was not accurately written in the sense he ignored the case $\mathcal{J}=(t^N)$ there. 
However, it works for modified K-stability which only concerns $S_2$ test configurations $\mathcal{X}$, whose corresponding flag ideal $\mathcal{J}$ should not be of that trivial form. Y. O apologizes for this inaccuracy. 

\section{Log K-stability of log Calabi-Yau varieties and log canonical models} 

In this section we extend \cite[Theorem 2.6, 2.10]{Od11b} as follows: 

\begin{Thm}\label{CY.gen}

{\rm (i)} Assume $(X, (1-\beta)D)$ is a log Calabi-Yau pair, i.e., $K_X+(1-\beta)D$ is numerically equivalent to zero divisor and it is semi-log-canonical pair (resp.\ kawamata-log-terminal pair). Then, $((X,D),L)$ is logarithmically K-semistable (resp.\ logarithmically K-stable) with cone angle $2\pi\beta$ for any polarization $L$. 

{\rm (ii)} Assume $(X,(1-\beta)D)$ is a semi-log-canonical model, i.e., $K_X+(1-\beta)D$ is ample and  it is a semi-log-canonical pair. Then, $((X,D),K_X+(1-\beta)D)$ and $\beta \in \mathbb{Q}_{>0}$ is log K-stable with cone angle $2\pi\beta$. 

\end{Thm}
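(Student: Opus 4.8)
The plan is to exploit the explicit intersection-theoretic formula for $\mathrm{DF}_\beta$ from Theorem \ref{DF.formula}, reducing both statements to a sign analysis of the ``canonical divisor part'' and the ``discrepancy term'' separately. By Corollary \ref{bl.enough} it suffices to test on blow-up semi-test configurations $(\mathcal{B}=Bl_{\mathcal{J}}(X\times\mathbb{A}^1),\mathcal{B}_{(D\times\mathbb{A}^1)},\mathcal{L}(-E))$ with $\mathcal{B}$ Gorenstein in codimension $1$ and $\mathcal{L}(-E)$ semiample, and we may assume $\mathcal{J}\neq(t^N)$ (otherwise the configuration is trivial, or pathological in the sense discussed after Theorem \ref{DF.formula}). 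Throughout, write $\Delta:=(1-\beta)D$ so that $(X,\Delta)$ is the relevant pair.

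For part (i): since $K_X+\Delta\equiv 0$, the first two terms of the formula in Theorem \ref{DF.formula} vanish — the coefficient $(L^{n-1}.(K_X+\Delta))=0$ kills the first, and $\Pi^*((K_X+\Delta)\times\mathbb{P}^1)\equiv 0$ kills the second. Hence
$$(n!)((n+1)!)\,\mathrm{DF}_\beta(\mathcal{B},\mathcal{L}-E)=(n+1)(L^n)\left((\bar{\mathcal{L}}-E)^n.(K_{\mathcal{B}/((X,\Delta)\times\mathbb{A}^1)})_{\mathrm{exc}}\right).$$
The point is then to show this intersection number is $\geq 0$, with strictness when $(X,\Delta)$ is klt. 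One writes $(K_{\mathcal{B}/((X,\Delta)\times\mathbb{A}^1)})_{\mathrm{exc}}=\sum a_i E_i$ over $\Pi$-exceptional prime divisors $E_i$ supported over $X\times\{0\}$, where $a_i=a(E_i;(X\times\mathbb{A}^1,\Delta\times\mathbb{A}^1))$; semi-log-canonicity of $(X,\Delta)$ gives $a_i\geq -1$ (and $a_i>-1$ in the klt case), while $E=\sum b_iE_i$ with $b_i>0$ since $\mathcal{J}$ is not trivial and is cosupported on $X\times\{0\}$. Following the nef-reduction/positivity argument of \cite[Theorem 2.6]{Od11b} — intersecting the semiample class $(\bar{\mathcal{L}}-E)^n$ against $\sum(a_i+1)E_i-E$ type combinations and using that $-E$ restricted to fibers is relatively semiample together with the structure of $\bar{\mathcal{B}}\to\mathbb{P}^1$ — one concludes nonnegativity; strict positivity in the klt case comes from the fact that at least one $E_i$ with $b_i>0$ has $a_i>-1$, so the corresponding contribution is strictly positive and cannot be cancelled. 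For the semi-log-canonical (non-normal) case one passes through the normalization and uses the ``partial normalization'' $\mathcal{B}^{p\nu}$ from the proof of Corollary \ref{bl.enough}, noting the conductor divisor contributes with the right sign.

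For part (ii): now $L=K_X+\Delta$ is ample, and we no longer have vanishing. Here I would imitate the proof of \cite[Theorem 2.10]{Od11b}: substitute $L=K_X+\Delta$ into the formula and rearrange. The first two terms combine — after using $\mathcal{L}=\Pi^*p_1^*(K_X+\Delta)$ and expanding $(\bar{\mathcal{L}}-E)^{n+1}$ — into an expression of the shape $-n(L^n)(\bar{\mathcal{L}}-E)^n.E + (\text{terms that telescope})$, and combining with the discrepancy term one gets, up to a positive multiple,
$$\mathrm{DF}_\beta(\mathcal{B},\mathcal{L}-E)=c\,(L^n)\left((\bar{\mathcal{L}}-E)^n.\bigl((K_{\mathcal{B}/(X,\Delta)\times\mathbb{A}^1})_{\mathrm{exc}}+E\bigr)\right)+(\text{nonneg. boundary contribution}),$$
where $c>0$; the divisor $(K_{\mathcal{B}}-\Pi^*((K_X+\Delta)\times\mathbb{A}^1))_{\mathrm{exc}}+E=\sum(a_i+1)E_i$ has all coefficients $\geq 0$ by semi-log-canonicity, and is nonzero since $\mathcal{J}\neq(t^N)$ forces some $E_i$ to actually appear (if it were the trivial blow-up, or if all $a_i=-1$ on the exceptional locus creating a log canonical center, one must rule out nontriviality — this is exactly where $S_2$-ness of the test configuration and $\mathcal{J}\neq(t^N)$ enter). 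Semiampleness and bigness of $\mathcal{L}-E$ on the total space (guaranteed since $L$ is ample and $\mathcal{L}^{\otimes r}(-E)$ is assumed semiample, with $E$ nontrivial) then make the intersection number strictly positive. The rationality hypothesis $\beta\in\mathbb{Q}_{>0}$ is needed so that $K_X+(1-\beta)D$ is a genuine $\mathbb{Q}$-Cartier divisor and the polarization makes sense.

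The main obstacle I anticipate is the strict positivity in part (ii) (and in the klt half of part (i)): one must show that the nonnegative intersection number is actually \emph{positive} for every nontrivial $S_2$ test configuration. This requires a careful argument that $(\bar{\mathcal{L}}-E)^n.E'>0$ for the relevant effective exceptional divisor $E'$ — equivalently that $\mathcal{L}-E$ is not merely semiample but ``strictly positive enough'' along the exceptional locus — which in \cite{Od11b} is handled by a delicate use of the Hodge index theorem / Zariski-type decomposition on the total space together with the fact that $E$ dominates $X\times\{0\}$; transporting this to the log setting and to non-normal (semi-log-canonical) $X$ via the partial normalization, while keeping track of the boundary part $\mathcal{B}_{(D\times\mathbb{A}^1)}$ and the conductor, is the technical heart of the proof. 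A secondary subtlety is verifying that the boundary contribution $(1-\beta)(a_0\tilde b_0-b_0\tilde a_0)$ decomposed via Theorem \ref{DF.formula} genuinely has the claimed sign on $\mathcal{B}_{(D\times\mathbb{A}^1)}$, which uses inversion of adjunction (Theorem \ref{IA}) to relate the singularities of the restriction to $D$ with those of the pair.
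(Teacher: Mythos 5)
Your overall strategy (reduce to blow-up semi-test configurations via Corollary \ref{bl.enough} and analyze the canonical divisor part and the discrepancy term of Theorem \ref{DF.formula} separately) is the same as the paper's, but your sign analysis of the discrepancy term has a genuine gap. You only record the bound $a_i\ge -1$ (resp.\ $a_i>-1$) for the coefficients of $(K_{\mathcal{B}/((X,(1-\beta)D)\times \mathbb{A}^1)})_{\rm exc}=\sum a_iE_i$, coming from (semi-)log-canonicity of $(X,(1-\beta)D)$ alone, and that is not enough: since $E=\sum b_iE_i$ with $b_i\ge 1$, comparing $\sum a_iE_i$ with $-E$ and using the Fact $((\bar{\mathcal{L}}^{\otimes r}(-E))^n.E)>0$ only yields the lower bound $-((\bar{\mathcal{L}}^{\otimes r}(-E))^n.E)<0$, so your proposed intersection against ``$\sum (a_i+1)E_i-E$''-type combinations cannot produce nonnegativity. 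The missing key step, which is exactly how the paper argues, is that every $\Pi$-exceptional prime divisor lies over $X\times\{0\}$ (because $(t^N)\subset\mathcal{J}$), so $a_i=a(E_i;(X\times\mathbb{A}^1,(1-\beta)(D\times\mathbb{A}^1)+X\times\{0\}))+{\rm val}_{E_i}(t)$ with ${\rm val}_{E_i}(t)\ge 1$, and inversion of adjunction (Theorem \ref{IA}) applied with $D'=X\times\{0\}$ converts slc (resp.\ klt) of $(X,(1-\beta)D)$ into log canonicity (resp.\ purely log terminality) of the product pair near the central fibre; hence $a_i\ge 0$ (resp.\ $a_i>0$). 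Only with this does semiampleness of $\mathcal{L}^{\otimes r}(-E)$ give nonnegativity, and in the klt case strictness follows because $\sum a_iE_i\ge cE$ for some $c>0$, combined with the Fact. Your strictness argument (``at least one $E_i$ with $b_i>0$ has $a_i>-1$, so its contribution is strictly positive and cannot be cancelled'') is not valid as stated: $a_i>-1$ does not make a term positive, individual intersection numbers can vanish, and cancellation is not excluded. You also misplace where inversion of adjunction enters: it is applied to the central fibre, not to relate the boundary part $(1-\beta)(a_0\tilde b_0-b_0\tilde a_0)$ to the singularities of the restriction to $D$ --- that boundary contribution is already absorbed into the log relative canonical divisor in Theorem \ref{DF.formula}.

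For part (ii) the paper is more direct than your telescoping rearrangement: it keeps the same decomposition, quotes \cite[Lemma 2.7, 2.8]{Od11b} to show that for $L=K_X+(1-\beta)D$ the canonical divisor part equals, up to a positive constant, $((\mathcal{L}^{\otimes r}(-E)).(\mathcal{L}^{\otimes r}(nE)))$ and is strictly positive, and then shows the discrepancy term is nonnegative exactly as in (i); strictness thus comes from the canonical part alone, not from your combined divisor $(K_{\mathcal{B}/((X,(1-\beta)D)\times\mathbb{A}^1)})_{\rm exc}+E$ (whose coefficients are $a_i+b_i$, not $a_i+1$, and whose claimed rearrangement you do not verify). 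Likewise, ``semiampleness and bigness'' of $\mathcal{L}^{\otimes r}(-E)$ does not by itself give strict positivity against an effective exceptional divisor; the precise input is the Fact $((\bar{\mathcal{L}}^{\otimes r}(-E))^n.E)>0$ from \cite{Od11b}, which you correctly flag as the technical heart, but it must be combined with the coefficient bound $a_i\ge 0$ (resp.\ $>0$) obtained via the central-fibre inversion of adjunction, not with $a_i\ge -1$.
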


\begin{Rem}
Theorem \ref{CY.gen} (i) extends and algebraically recovers \cite[Theorem 1.1]{Sun11}, 
which gave a more differential geometric proof using existence of Calabi-Yau metrics on $D$ for smooth $D$ case.   Also it provides an algebraic counterpart of 
\cite[Theorem1.1]{Bre11} and \cite[Theorem 2]{JMR11}, where K\"ahler-Einstein metrics with cone angle $2\pi\beta$ are constructed on (smooth) log Calabi-Yau and (smooth)  log canonical models.
\end{Rem}

\begin{proof}

It is easy to see that the canonical divisor part vanishes for the case {\rm (i)} as 
our log canonical divisor is zero. For the case {\rm (ii)}, as in \cite{Od11b}, 
the canonical divisor part equals to $((\mathcal{L}^{\otimes r}(-E)).(\mathcal{L}^{\otimes r}(nE)))$ up to positive constant, and it is proven to be positive in \cite[Lemma 2.7, 2.8]{Od11b}.  

Thus, it is enough to prove that the discrepancy term is 
positive (resp.\ non-negative) if $(X,(1-\beta)D)$ is kawamata-log-terminal (resp.\ 
semi-log-canonical). From the inversion of adjunction (Theorem \ref{IA}), 
it follows that all the coefficients of $(K_{\mathcal{B}/((X,(1-\beta)D)\times \mathbb{A}^1)})_{\rm exc}$ are positive (resp.\ non-negative) 
as in \cite[proof of Theorems 2.6, 2.10]{Od11b}. 
From that, using semiampleness of $\mathcal{L}^{\otimes r}(-E)$ and the following fact 
which is proven in \cite{Od11b}, it follows that the discrepancy term is positive (resp.\ non-negative). 
\begin{Fact}[{\cite[inequality (3) in the Proof of Theorem 2.10]{Od11b}}]
The following inequality holds in our setting: $((\bar{\mathcal{L}}^{\otimes r}(-E))^n.E)>0. $
\end{Fact}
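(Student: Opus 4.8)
The plan is to deduce the strict positivity from the semiampleness of $M:=\bar{\mathcal{L}}^{\otimes r}(-E)$ together with the geometry of the exceptional divisor $E$, rather than from any purely numerical manipulation (which is bound to be circular, since $E=r\bar{\mathcal{L}}-M$ turns the asserted inequality into a tautology). First I would record two elementary inputs. Since $\mathcal{L}^{\otimes r}(-E)$ is semiample, $M$ is nef, so $(M^k.V)\ge 0$ for every $k$-dimensional subvariety $V\subset\bar{\mathcal{B}}$. Moreover, over a general $t\in\mathbb{P}^1$ the morphism $\Pi$ is an isomorphism and $E$ is trivial there, so if $F$ denotes the class of a fibre of $\bar{\mathcal{B}}\to\mathbb{P}^1$ then $M|_F=rL$ is ample and
\[
(M^n.F)=(rL)^n=r^n(L^n)>0 .
\]
The main approach is the projection formula: as $M$ is semiample, for $m\gg0$ the system $|mM|$ is base point free and defines a birational morphism $\phi\colon\bar{\mathcal{B}}\to Z$ with $mM=\phi^*A$ for an ample class $A$ on $Z$ (here $Z$ is precisely the test configuration contracted from the blow-up). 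Then $m^n(M^n.E)=(A^n.\phi_*E)$, which is strictly positive as soon as $\phi_*E\neq0$, i.e.\ as soon as $\dim\phi(E)=\dim E=n$, and which vanishes exactly when $E$ is contracted by $\phi$. Thus the entire statement reduces to the single geometric assertion that \emph{$E$ is not contracted by $\phi$}.

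To make this concrete I would pass to the central fibre. Writing $\bar{\mathcal{B}}_0=\Pi^*(X\times\{0\})\equiv F$ and decomposing $\bar{\mathcal{B}}_0=\widetilde{X}_0+Z_E$, where $\widetilde{X}_0$ is the strict transform and $Z_E>0$ is effective and supported on $E$ (this is the same kind of decomposition $\Pi^*(D\times\mathbb{P}^1)=\bar{\mathcal{B}}_{(D\times\mathbb{P}^1)}+(\,\cdots\,)_{\mathrm{exc}}$ already used in the proof of Theorem \ref{DF.formula}), the computation above reads
\[
r^n(L^n)=(M^n.\widetilde{X}_0)+(M^n.Z_E).
\]
Both summands are $\ge0$ and $Z_E$ is supported on $E$, so it suffices to prove the strict inequality $(M^n.\widetilde{X}_0)<r^n(L^n)$. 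On $\widetilde{X}_0$ the induced map $\nu:=\Pi|_{\widetilde{X}_0}\colon\widetilde{X}_0\to X$ is birational, $\bar{\mathcal{L}}|_{\widetilde{X}_0}=\nu^*L$, and $M|_{\widetilde{X}_0}=r\nu^*L-G$ is nef with $G:=E|_{\widetilde{X}_0}\ge0$ a $\nu$-exceptional divisor. Peeling off one factor of $M|_{\widetilde{X}_0}=r\nu^*L-G$ at a time, and using nefness to discard each nonnegative term $((M|_{\widetilde{X}_0})^{j}.(\nu^*L)^{n-1-j}.G)\ge0$, gives the chain $(M|_{\widetilde{X}_0})^n\le r\,(M|_{\widetilde{X}_0})^{n-1}.\nu^*L\le\cdots\le r^n(L^n)$, where the final equality uses $\nu_*$ of $\nu$-exceptional cycles being zero.

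The hard part is the \emph{strictness} of this last inequality, which is exactly the non-contraction of $E$ and is where the hypothesis that $\mathcal{J}$ is nontrivial (so that $E\neq0$ and the centre of $\Pi$ genuinely sits inside $X\times\{0\}$, meeting $\widetilde{X}_0$) must be used. I expect to settle it by producing an $n$-dimensional component $E'\subseteq E$ on which $M$ is big, whence $\dim\phi(E')=n$ and $(M^n.E)\ge(M^n.E')>0$. Here one writes $p:=\Pi|_{E'}\colon E'\to B:=\Pi(E')\subseteq X$; the class $-E|_{E'}$ is $p$-ample, while $\bar{\mathcal{L}}|_{E'}=p^*(L|_B)$ is the pullback of the big and nef $L|_B$, so in the relative intersection computation of $(M|_{E'})^n$ the leading (transverse) contribution $r^{\dim B}(L|_B^{\dim B})\big((-E|_{E'})^{\,n-\dim B}.\mathrm{fibre}\big)$ is strictly positive because $-E|_{E'}$ restricts to an ample class on the positive-dimensional $p$-fibres. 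The genuine obstacle is thus geometric, namely controlling the interaction of the relatively ample class $-E$ with the fixed exponent $r$ and the global nefness of $M$ well enough to guarantee bigness of $M|_{E'}$ (equivalently $\phi_*E\neq0$); the surrounding intersection-number bookkeeping is routine by comparison.
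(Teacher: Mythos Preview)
The paper does not actually prove this Fact; it is quoted verbatim from \cite[inequality (3) in the Proof of Theorem 2.10]{Od11b} and used as a black box inside the proof of Theorem~\ref{CY.gen}. So there is no argument here to compare your proposal against line by line.

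That said, your write-up is not a complete proof, and you already say so. Your reductions are fine: with $M=\bar{\mathcal{L}}^{\otimes r}(-E)$ nef and $\phi$ the morphism determined by $|mM|$, the projection formula gives $(M^n.E)>0\Longleftrightarrow \phi_*E\neq 0$; and decomposing the central fibre as $\bar{\mathcal{B}}_0=\widetilde{X}_0+Z_E$ turns the question into the strict inequality $(M|_{\widetilde{X}_0})^n<r^n(L^n)$. The peeling argument you give only yields $(M|_{\widetilde{X}_0})^n\le r^n(L^n)$, and you explicitly flag strictness as ``the hard part''. Your proposed fix, namely exhibiting a component $E'\subset E$ with $M|_{E'}$ big, is exactly the right target, but the paragraph you devote to it is a heuristic (``I expect to settle it by \dots'', ``the genuine obstacle is \dots'') rather than an argument: knowing that $-E|_{E'}$ is $\Pi|_{E'}$-ample and $\bar{\mathcal{L}}|_{E'}$ is a pullback of an ample class only gives $\dim\phi(E')+\dim\Pi(E')\ge n$, not $\dim\phi(E')=n$, so bigness of $M|_{E'}$ does not follow from those two facts alone at the fixed exponent $r$.

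One concrete observation that would sharpen your attempt: the case $I_0=0$ is immediate, since then $\widetilde{X}_0=0$ and the whole central fibre is supported on $E$, whence $(M^n.E)\ge\min_i(c_i/m_i)\cdot(M^n.\bar{\mathcal{B}}_0)=\min_i(c_i/m_i)\cdot r^n(L^n)>0$. The genuine work is in the case $I_0\neq 0$, and for that you should consult the cited argument in \cite{Od11b} rather than trying to reinvent it; the present paper does not supply it.
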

\end{proof}

\begin{Rem}
For the case {\rm (i)}, $(\mathcal{B},\mathcal{L}^{\otimes r}(-E))$ have vanishing log Donaldson-Futaki invariant for $\mathcal{J}$ and $r\in \mathbb{Z}_{>0}$ 
if and only if all the exceptional prime divisors supported on 
$\phi^{-1}_*(\mathcal{X}|_{\{0\}})$ in $\mathcal{B}$ have coefficients zero in 
$(K_{\mathcal{B}/((X,(1-\beta)D)\times \mathbb{A}^1)})_{{\rm exc}}$, 
where $\phi \colon \mathcal{B}\rightarrow \mathcal{X}:={\it Proj}\oplus 
(H^0(X\times \mathbb{A}^1,\mathcal{J}^m(p_1^{*}L^{\otimes rm})))$ is the 
natural morphism defined by the semi-ample line bundle $\mathcal{L}^{\otimes r}(-E)$. 
This follows straightforward from our proof. 

In particular, any image of 
such $\Pi$-exceptional prime divisor $E_i$ with ${\rm codim}(\phi_*E_i \subset \mathcal{X})=1$, 
$\Pi(E_i)$ is log canonical center of $(X\times \mathbb{A}^1,(1-\beta)(D\times \mathbb{A}^1)+X\times \{0\})$ which have only finite candidates 
(cf.\ \cite[Proposition 4.7, 4.8]{Amb03}, \cite[Theorem 2.4]{Fuj11}). For example, if $X$ and $D$ are both smooth, all those $E_i$ have $\Pi(E_i)=D\times \{0\}$. 
\end{Rem}

\begin{Rem}
Concerning the finiteness of automorphism groups of polarized log pair ${\rm Aut}((X,D),L):=\{ \sigma \in {\rm Aut}(X) \mid \sigma^* D=D, \sigma^*L \cong L \}$, 
as we argued in \cite{Od11b}, \cite{OS10}, they follows as a special case of 
\cite[Proposition 4.6]{Amb05}. On the other hand, once we know the reductivity as 
analogue of Matsushima's theorem, we can prove the finiteness after Theorem \ref{CY.gen}. 
However, we allow (semi-)log-canonical singularities to Calabi-Yau pair, from which we can only deduce log K-{\it semi}stability, we do not have finiteness of the autmorphism group in general, e.g., $\mathbb{P}^1$ with two reduced points attached. 
\end{Rem}

\section{Log K-stability and alpha invariants}\label{sec.alpha}

In this section, we extend the result of \cite[Theorem 1.4]{OS10} to results for $\mathbb{Q}$-Fano varieties with anti-canonical boundaries. 
On the way, we also recover an algebraic counterpart of \cite[Theorem 1.8]{Ber10}. 

First recall the definition of {\it global log canonical threshold} (defined in algebro-geometric terms) and the {\it alpha invariant} (defined in analytic terms), which is proven to be equivalent. The definition of the {\it global log canonical threshold} is the following, 
which we use. 

\begin{Def}\label{eq:alpha_finite} 
Assume $(X,D)$ is a log canonical pair with $D$ $\mathbb{Q}$-Cartier, 
which we allow to be $\mathbb{Q}$-divisor in this definition. Set: 

	$${\rm glct}((X,D);L)
	:=
	{\rm inf}{_{m\in\mathbb{Z}_{>0}}}
{\rm inf}{_{
E\in |mL|}
}
{\rm lct}\big((X,D), \frac{1}{m}E \big), $$
which we call the {\it global log canonical threshold} of the pair $(X,D)$ with respect to the polarization $L$. If $D=0$, we simply write ${\rm glct}(X;L)$. 
Here, by the definition of the usual log canonical threshold, 

$${\rm lct}((X,D),\frac{1}{m}E)):={\rm sup}\{ \alpha\mid (X,D+\frac{\alpha}{m}E) \mbox{ is log canonical} \}. $$
\end{Def}

The definition of the {\it alpha invariant} is the following. 
It is first defined by \cite{Tia87} and its natural extension to log setting is 
also discussed in \cite[section 6]{Ber10}. 

\begin{Def}
Assume $X$ is smooth and $(X,D)$ is a klt pair for an effective $\mathbb{Q}$-divisor $D$ in this definition. Write $D$ as $D=\sum d_iD_i$ where $D_i$ are prime divisors  locally defined by $(f_i=0)$. 
Let $\omega$ be a fixed K\"ahler form with K\"ahler class $c_{1}(L)$. 
Let $P(X,\omega)$ be the set of K\"ahler potentials defined by
$$
	P(X,\omega):=\{\varphi\in C^2_\mathbb{R}(X) \mid \sup\varphi=0,\,\omega+\frac{\sqrt{-1}}{2\pi}\partial\bar\partial \varphi >0\}, 
$$
\noindent
where $C^2_\mathbb{R}(X)$ means a space of real valued continuous function of $X$ 
of class $C^2$. The definition of {\it alpha invariant} of $(X,D)$ with respect to the polarization $L$ is: 
$$
	\alpha((X,D);L):=\sup\{\alpha \mid 
e^{-\alpha\varphi}\prod |f_i|^{-2\alpha d_i}
\mbox{ is locally integrable for all } \varphi\in P(X,\omega)\}. 
$$
\end{Def}

\noindent
This is independent of the choice of $\omega$. 
It is known that these notions are equivalent as follows: 
\begin{Fact}[{\cite[Appendix A]{CSD08}, 
\cite[section 6]{Ber10}}]

${\rm glct}((X,D);L)=\alpha((X,D);L)$ for 
klt pair $(X,D)$ with smooth $X$ and polarization $L$. 
\end{Fact}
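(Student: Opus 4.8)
The plan is to read both invariants as complex singularity exponents and to match them through the Demailly--Koll\'ar dictionary relating log canonical thresholds to local integrability of plurisubharmonic weights, carried out for the pair $(X,D)$.

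Fix a smooth Hermitian metric $h$ on $L$ whose curvature form is the K\"ahler form $\omega$ (so $[\omega]=c_1(L)$), and write $D=\sum_i d_iD_i$ with $D_i$ locally defined by $f_i$. For $m\ge 1$ and a section $s\in H^0(X,L^{\otimes m})$ with $\divi(s)=E$, put
\[
\varphi_E:=\tfrac1m\log\bigl(|s|^2_{h^{\otimes m}}/\sup\nolimits_X|s|^2_{h^{\otimes m}}\bigr),
\]
so that $\varphi_E$ is $\omega$-plurisubharmonic, $\sup_X\varphi_E=0$, and $\omega+\tfrac{\sqrt{-1}}{2\pi}\partial\bar\partial\varphi_E=\tfrac1m[E]$ by the Lelong--Poincar\'e formula. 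Taking a log resolution of $(X,D+\tfrac1m E)$ and reading off discrepancies --- equivalently, using the analytic (Nadel-type) characterization of the log canonical threshold --- gives, for every point $x$,
\[
{\rm lct}_x\bigl((X,D),\tfrac1m E\bigr)=\sup\Bigl\{c>0 : e^{-c\varphi_E}\textstyle\prod_i|f_i|^{-2cd_i}\in L^1\ \text{near }x\Bigr\}.
\]
Hence ${\rm glct}((X,D);L)$ is precisely the supremum of those $c>0$ for which $e^{-c\varphi}\prod_i|f_i|^{-2cd_i}$ is locally integrable for \emph{every} weight $\varphi$ of the algebraic shape $\varphi_E$, $E\in|mL|$, $m\ge 1$. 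Since each such $\varphi_E$ is an admissible test object in the definition of $\alpha$ --- directly, when one tests against all $\omega$-plurisubharmonic potentials; or, testing only against smooth ones, after the decreasing regularization $\tfrac1m\log(|s|^2_{h^{\otimes m}}+\varepsilon)\in P(X,\omega)$ followed by monotone convergence --- this already yields the easy inequality $\alpha((X,D);L)\le{\rm glct}((X,D);L)$.

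For the reverse inequality, fix $c<{\rm glct}((X,D);L)$; it suffices to show $e^{-2c'\varphi}\prod_i|f_i|^{-2c'd_i}\in L^1_{\mathrm{loc}}$ for every $\omega$-plurisubharmonic $\varphi$ and every $c'<c$. I would use Demailly's regularization: for $k\gg 0$ let $(s_{k,j})_j$ be an orthonormal basis of $H^0(X,L^{\otimes k})$ for the weighted inner product attached to $h^{\otimes k}e^{-2k\varphi}$, and set $\varphi_k:=\tfrac1{2k}\log\sum_j|s_{k,j}|^2_{h^{\otimes k}}$. The Ohsawa--Takegoshi $L^2$ extension theorem gives $\varphi_k\ge\varphi-C/k$, and since $\sum_j|s_{k,j}|^2_{h^{\otimes k}}\ge|s_{k,j_0}|^2_{h^{\otimes k}}$ for each $j_0$, the weight $e^{-2c\varphi_k}$ is dominated, up to a smooth factor, by $|s_{k,j_0}|^{-2c/k}_{h^{\otimes k}}$; as $\tfrac1k\divi(s_{k,j_0})$ belongs to the family $\{\tfrac1m E\}$ it has log canonical threshold against $(X,D)$ at least ${\rm glct}((X,D);L)>c$, so $e^{-2c\varphi_k}\prod_i|f_i|^{-2cd_i}\in L^1_{\mathrm{loc}}$. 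Then Demailly's Skoda-type comparison of multiplier ideals, $\mathcal{I}\bigl(c\varphi_k;(X,D)\bigr)\subseteq\mathcal{I}\bigl((c-\tfrac nk)\varphi;(X,D)\bigr)$ (where $\mathcal{I}(\psi;(X,D))$ is the multiplier ideal of the weight $\psi+\sum_i d_i\log|f_i|^2$), forces $\mathcal{I}(c'\varphi;(X,D))=\mathcal{O}_X$ for every $c'<c$ once $k$ is large, i.e.\ local integrability of $e^{-2c'\varphi}\prod_i|f_i|^{-2c'd_i}$. Letting $c\uparrow{\rm glct}((X,D);L)$ gives $\alpha((X,D);L)\ge{\rm glct}((X,D);L)$, completing the proof; to match the uniform-integral-bound form of Tian's $\alpha$ one keeps track of constants, using that the constant in Ohsawa--Takegoshi is universal.

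The main obstacle is this reverse inequality: the passage from ``all members of all $|mL|$ have controlled log canonical threshold'' to ``all K\"ahler potentials are integrable'' rests on the Ohsawa--Takegoshi extension theorem and on Demailly's theorem on approximation of plurisubharmonic functions and stabilization of their multiplier ideals, with the extra bookkeeping needed to carry the boundary weight $\prod_i|f_i|^{-2cd_i}$ through --- precisely the content of \cite[Appendix A]{CSD08} and \cite[section 6]{Ber10}. A secondary issue is to pin down the intended reading of the definition of $\alpha$ (local integrability against all $\omega$-plurisubharmonic potentials versus a uniform integral bound against smooth ones --- these agree here) and the openness of the defining condition, which in the present smooth, klt situation follows from Hironaka resolution, or can be invoked from Berndtsson's proof of the openness conjecture.
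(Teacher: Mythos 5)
The paper offers no argument for this Fact at all: it is quoted with the citations to Demailly's appendix in \cite{CSD08} and to \cite[section 6]{Ber10}, together with the one-line remark that it rests on the approximation theory of plurisubharmonic functions. Your proposal is essentially a sketch of exactly that cited argument (Lelong--Poincar\'e plus a log resolution for the inequality $\alpha\le{\rm glct}$, Ohsawa--Takegoshi and Demailly's Bergman-kernel approximation for the reverse), and your remark about the $C^2$ versus $\omega$-psh reading of $P(X,\omega)$ addresses a real looseness in the paper's definition. So in approach you coincide with the proof the paper points to.

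One step is, however, stated in a false form: the comparison $\mathcal{I}(c\varphi_k;(X,D))\subseteq\mathcal{I}\bigl((c-\tfrac nk)\varphi;(X,D)\bigr)$ cannot hold with a loss that is subtractive in the coefficient. Already for $\varphi=\log|z|$ near $0\in\mathbb{C}^2$ (so $c_0(\varphi)=2$) the weight $e^{-2k\varphi}=|z|^{-2k}$ forces the relevant sections into $\mathfrak{m}^{k-1}$, hence $\varphi_k=\tfrac{k-1}{k}\log|z|+O(1)$ and $c_0(\varphi_k)=\tfrac{2k}{k-1}$; taking $c$ just below $\tfrac{2k}{k-1}$, the function $e^{-2c\varphi_k}$ is integrable while $c-\tfrac2k=2+\tfrac{2}{k(k-1)}-\varepsilon>2$, so $e^{-2(c-2/k)\varphi}$ is not. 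The correct comparison, which is the one established and used in the cited sources, carries the loss multiplicatively in the coefficient (equivalently, additively in the reciprocal singularity exponent, $\tfrac{1}{c_x(\varphi)}\le\tfrac{1}{c_x(\varphi_k)}+\tfrac{\mathrm{const}}{k}$), as the same model $\varphi=\gamma\log|z|$ already shows. Since in your argument you only need $c_x(\varphi)\ge{\rm glct}((X,D);L)$ after letting $k\to\infty$, and your bound $c_x(\varphi_k)\ge{\rm glct}$ via $\tfrac1k\,\mathrm{div}(s_{k,j_0})$ is fine, the proof is repaired verbatim by quoting the estimate in that corrected form (or by invoking the effective semicontinuity theorem of Demailly--Koll\'ar, which is how \cite[Appendix A]{CSD08} proceeds); with that change, and the boundary bookkeeping you already indicate, your argument is the one the paper's references carry out.
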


\noindent
It depends on the approximation theory of pluri-subharmonic functions. 
Consult \cite[Appendix A]{CSD08}, 
\cite[section 6]{Ber10} for the details. 

\begin{Thm}\label{alpha}
For a $\mathbb{Q}$-Fano variety $X$ (i.e. $-K_X$ ample) and anti-canonical effective integral reduced  $\mathbb{Q}$-Cartier divisor $D$, which form a purely log terminal pair (resp.\ semi-log-canonical pair) $(X,D)$, 
if ${\rm glct}((X,(1-\beta)D);-K_X)>(resp.\ \geq )(n/n+1)\beta$ then it is logarithmically  K-stable (resp.\ logarithmically K-semistable) with cone angle $2\pi\beta$. 
\end{Thm}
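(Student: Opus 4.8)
The plan is to run the by-blowing-up formalism of Corollary \ref{bl.enough} together with the explicit formula of Theorem \ref{DF.formula}, reducing log K-(semi)stability to a positivity statement for intersection numbers on $\bar{\mathcal{B}}$, and then to feed the global log canonical threshold hypothesis into the ``discrepancy term'' via an inversion-of-adjunction estimate. First I would fix a flag ideal $\mathcal{J}$ and the blow up $\Pi\colon\bar{\mathcal{B}}\to X\times\mathbb{P}^1$ with exceptional Cartier divisor $E$, assuming $\mathcal{L}^{\otimes r}(-E)$ semiample and $\mathcal{B}$ Gorenstein in codimension $1$; by Corollary \ref{bl.enough} it suffices to show $\mathrm{DF}_\beta(\mathcal{B},\mathcal{L}^{\otimes r}(-E))>0$ (resp.\ $\geq 0$) for all nontrivial such data. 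Since $-K_X$ is ample and we polarize by $L=-K_X$, the ``canonical divisor part'' (first two terms of Theorem \ref{DF.formula}) becomes, after substituting $K_X+(1-\beta)D\equiv -\beta K_X = \beta L$, a positive multiple of $((\bar{\mathcal{L}}^{\otimes r}(-E))^n.\bar{\mathcal{L}}^{\otimes r}(-E)+nE)$ type expression, handled exactly as in \cite{Od11b} and \cite{OS10}; the point is that it is bounded below by a positive multiple of $\beta$ times $((\bar{\mathcal{L}}^{\otimes r}(-E))^n.E)>0$ (the Fact quoted in the proof of Theorem \ref{CY.gen}), coming with the ``wrong'' denominator $1$ rather than the needed $(n/(n+1))$.

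The key step is then to balance this against the discrepancy term $(n+1)(L^n)((\bar{\mathcal{L}}^{\otimes r}(-E))^n.(K_{\mathcal{B}/((X,(1-\beta)D)\times\mathbb{A}^1)})_{\mathrm{exc}})$, which is where the glct hypothesis enters. Following the strategy of \cite[Theorem 1.4]{OS10}, I would write the flag ideal blow up as dominated by a log resolution and compare $(K_{\mathcal{B}/((X,(1-\beta)D)\times\mathbb{A}^1)})_{\mathrm{exc}}$ with $E$ itself: the assumption $\mathrm{glct}((X,(1-\beta)D);-K_X)>\tfrac{n}{n+1}\beta$ says precisely that for any effective $\mathbb{Q}$-divisor $\Gamma\sim_{\mathbb{Q}}\lambda L$ the pair $(X,(1-\beta)D+\tfrac{n}{n+1}\beta\lambda^{-1}\Gamma)$ stays (sub-)log canonical, i.e.\ the discrepancies $a(E_i;(X,(1-\beta)D))$ are at least $-1+\tfrac{n}{n+1}\beta\,\mathrm{val}_{E_i}(\Gamma)/\lambda$ for divisors $E_i$ over $X$. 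Pulling this back to $\mathcal{B}$ and using that the restriction of $\mathcal{J}$ cuts out (a piece of) such a $\Gamma$ on a generic fiber, I get a lower bound $(K_{\mathcal{B}/((X,(1-\beta)D)\times\mathbb{A}^1)})_{\mathrm{exc}}\geq -E' + \tfrac{n}{n+1}\beta\cdot(\text{something}\sim E)$ after clearing an extra $X\times\{0\}$ contribution (note the pair to test is $(X\times\mathbb{A}^1,(1-\beta)(D\times\mathbb{A}^1)+X\times\{0\})$, so the relevant threshold is on the central fiber). Intersecting with the nef class $(\bar{\mathcal{L}}^{\otimes r}(-E))^n$ and again invoking $((\bar{\mathcal{L}}^{\otimes r}(-E))^n.E)>0$ converts this divisorial inequality into the numerical inequality I need; assembling the canonical part and the discrepancy part, the $\tfrac{n}{n+1}$ factors are arranged exactly so that the total is $>0$ (resp.\ $\geq 0$), with strict positivity coming either from the threshold being strict or from $\Pi$ being nontrivial. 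The plt (resp.\ slc) hypothesis on $(X,D)$ is what makes the glct finite and positive to begin with, via inversion of adjunction (Theorem \ref{IA}), and also guarantees the needed Gorenstein-in-codimension-$1$ and $S_2$ reductions survive.

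The main obstacle I expect is making the comparison between the ``abstract'' discrepancy divisor $(K_{\mathcal{B}/((X,(1-\beta)D)\times\mathbb{A}^1)})_{\mathrm{exc}}$ on the flag-ideal blow up and the threshold condition, which is phrased for honest effective divisors $E\in|mL|$ on $X$: one must argue that a flag ideal $\mathcal{J}=I_0+I_1t+\cdots+(t^N)$ can be ``tested'' by choosing, for each valuation $E_i$ over $X$ appearing in $\mathcal{B}$, a divisor in $|mL|$ (for suitable $m$) whose order along $E_i$ is controlled by the coefficients of the $I_j$'s, i.e.\ essentially that the multiplier-type data of $\mathcal{J}$ on the central fiber is detected by linear systems $|mL|$ up to the losses that get absorbed in the $n/(n+1)$ slack. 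This is the technical heart of \cite[Theorem 1.4]{OS10}, and the new content here is only to carry the boundary term $(1-\beta)D$ through it; the rest is bookkeeping with Theorem \ref{DF.formula}. A secondary subtlety is the careful treatment of the extra $X\times\{0\}$ divisor and of the distinction between discrepancies over $X\times\mathbb{A}^1$ and over $X$, which I would handle exactly as in \cite{Od11b}.
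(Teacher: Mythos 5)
Your overall plan coincides with the paper's proof: reduce via Corollary \ref{bl.enough} and Theorem \ref{DF.formula} to blow-ups of flag ideals, split off a non-negative term handled by \cite[Proposition 4.3]{OS10}, feed the glct hypothesis into the discrepancy term, and use inversion of adjunction (Theorem \ref{IA}) to deal with the $X\times\{0\}$ component. But the step you defer as ``the main obstacle'' --- relating the flag ideal $\mathcal{J}$ to honest members of $|mL|$ so that the glct hypothesis applies --- is precisely the content of the paper's proof, and it is resolved there by a short, concrete mechanism that your sketch does not supply: since $I_0\subseteq\mathcal{J}$, one has $a(E_i;(X\times\mathbb{A}^1,(1-\beta)(D\times\mathbb{A}^1)+\tfrac{n\beta}{r(n+1)}\mathcal{J}+X\times\{0\}))\geq a(E_i;\cdots+\tfrac{n\beta}{r(n+1)}I_0+\cdots)$, and then semiampleness of $\mathcal{L}^{\otimes r}(-E)$ forces $H^0(X,I_0^{m}(-rmK_X))\neq 0$ for sufficiently divisible $m$ (it is the $\mathbb{G}_m$-fixed part of the generating sections of $\mathcal{J}^m\mathcal{L}^{\otimes rm}$), so one can replace $I_0$ by the divisor $F$ of such a section, getting a further $\geq$ and landing exactly in the situation covered by ${\rm glct}((X,(1-\beta)D);-K_X)$. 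In particular no ``loss absorbed in the $n/(n+1)$ slack'' is needed at this point, and the relevant restriction of $\mathcal{J}$ is $I_0$ on the \emph{central} fiber (on a general fiber $\mathcal{J}$ is trivial), not the generic fiber as you write. Without this bridge your argument is a plan, not a proof, since the glct hypothesis quantifies only over divisors in $|mL|$, not over arbitrary ideals or valuations appearing in $\mathcal{B}$.

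There is also a sign problem in your treatment of the canonical divisor part. With $D\sim -K_X$ and $L=-K_X$ one has $K_X+(1-\beta)D\equiv \beta K_X=-\beta L$ (not $\beta L$), and the canonical part becomes $\beta(L^n)\bigl[-((\bar{\mathcal{L}}-E)^n.\bar{\mathcal{L}})-n((\bar{\mathcal{L}}-E)^n.E)\bigr]$: only the first piece is non-negative (this is what \cite[Proposition 4.3]{OS10} gives), while the second piece is strictly \emph{negative} by the Fact $((\bar{\mathcal{L}}-E)^n.E)>0$. So the canonical part is not ``bounded below by a positive multiple of $\beta((\bar{\mathcal{L}}-E)^n.E)$''; rather, the term $-n\beta E$ must be folded into the discrepancy term, and the resulting target is that all coefficients of $(n+1)r(K_{\mathcal{B}/((X,(1-\beta)D)\times\mathbb{A}^1)})_{\rm exc}-n\beta E$ be positive (resp.\ non-negative) --- this is the sole source of the factor $\tfrac{n}{n+1}\beta$ in the threshold. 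Once stated this way, the chain of discrepancy inequalities above plus inversion of adjunction reduces everything to klt/lc of $(X,(1-\beta)D+\tfrac{n\beta}{r(n+1)m}F)$, which is exactly what the glct assumption provides; your version, as written, would make the glct hypothesis superfluous, which signals the miscalculation.
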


\noindent
We note that $\beta=0$ case also follows from Theorem \ref{CY.gen} {\rm (i)}. 
Using the original alpha invariant, we state a weaker result as follows. 
This corresponds to the analytic statement of \cite[Theorem 1.8]{Ber10}. 

\begin{Cor}\label{weak.alpha}
For the above setting, we further assume that $D$ is irreducible and Cartier. 
Then, $(X,-K_X)$ is logarithmically K-stable (resp.\ logarithmically K-semistable) for cone  angle $2\pi\beta$ with $0<\beta<(\frac{n+1}{n}){\rm min}\{{\rm glct}(X;-K_X), {\rm glct}(D;-K_X|_D)\}$ (resp.\ $0\leq\beta\leq(\frac{n+1}{n}){\rm min}\{{\rm glct}(X;-K_X), {\rm glct}(D;-K_X|_D)\}$). 
\end{Cor}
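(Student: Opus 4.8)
The goal is to prove Corollary \ref{weak.alpha}, which should follow from Theorem \ref{alpha} by comparing the global log canonical threshold of the pair $(X,(1-\beta)D)$ with respect to $-K_X$ to the minimum of the two absolute thresholds $\mathrm{glct}(X;-K_X)$ and $\mathrm{glct}(D;-K_X|_D)$.

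\medskip

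\textbf{Plan.} The plan is to reduce Corollary \ref{weak.alpha} to Theorem \ref{alpha} by establishing the inequality
$$
\mathrm{glct}\bigl((X,(1-\beta)D);-K_X\bigr)\ \geq\ \min\Bigl\{\mathrm{glct}(X;-K_X),\ \mathrm{glct}(D;-K_X|_D)\Bigr\}\cdot\bigl(1-c\bigr)
$$
for a suitable correction depending on $\beta$, and then checking that the hypothesis $\beta<(\tfrac{n+1}{n})\min\{\cdots\}$ forces $\mathrm{glct}((X,(1-\beta)D);-K_X)>(\tfrac{n}{n+1})\beta$. The key tool is inversion of adjunction (Theorem \ref{IA}): since $D$ is an irreducible Cartier divisor and $(X,D)$ is plt near $D$ (which follows, via Theorem \ref{IA}(i), from the hypothesis that $(X,D)$ is a plt pair — here $D''=0$, so $(D, 0)$ is klt, i.e.\ $D$ itself has klt, hence it makes sense to talk about $\mathrm{glct}(D;-K_X|_D)$), one can control the log canonical threshold of $(X,(1-\beta)D+\tfrac{\alpha}{m}E)$ at points of $D$ by restricting to $D$ and using $K_D = (K_X+D)|_D$ together with $-K_X|_D = ((1-\beta)D - K_X - (1-\beta)D)|_D$; one rewrites $-K_D = -K_X|_D - D|_D$ and tracks how the coefficient $(1-\beta)$ interacts with the restriction. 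Away from $D$, the pair is just $(X,\tfrac{\alpha}{m}E)$ and is governed by $\mathrm{glct}(X;-K_X)$ directly.

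\medskip

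\textbf{Steps.} First I would fix $m$ and $E\in|{-mK_X}|$ and split the verification of log canonicity of $(X,(1-\beta)D+\tfrac{\alpha}{m}E)$ into two regions: the open set $X\setminus D$, where log canonicity for $\tfrac{\alpha}{m}\leq\mathrm{glct}(X;-K_X)$ is immediate from the definition of the absolute glct; and a neighbourhood of $D$, where I apply inversion of adjunction. Second, on the neighbourhood of $D$, I would write $(1-\beta)D+\tfrac{\alpha}{m}E = D - \beta D + \tfrac{\alpha}{m}E$ and use Theorem \ref{IA}(ii) to reduce log canonicity of $(X, D+(\tfrac{\alpha}{m}E - \beta D))$ near $D$ to log canonicity of $(D, (\tfrac{\alpha}{m}E-\beta D)|_D)$; since $E|_D\in|{-mK_X|_D}|$ and, by adjunction, $-K_X|_D = -K_D + D|_D$, I can absorb the $\beta D|_D$ term and compare with $\mathrm{glct}(D;-K_X|_D)$ and $\mathrm{glct}(D;-K_D)$-type quantities, being careful that $D$ Cartier makes $D|_D$ a genuine divisor class. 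Third, assembling the two regions and optimizing over $m$ and $E$ gives a lower bound for $\mathrm{glct}((X,(1-\beta)D);-K_X)$ in terms of the two absolute invariants and $\beta$; fourth, I plug this bound into Theorem \ref{alpha}, i.e.\ check the chain of inequalities shows $\mathrm{glct}((X,(1-\beta)D);-K_X)>(\tfrac{n}{n+1})\beta$ whenever $\beta$ lies in the stated range, and likewise with non-strict inequalities for the semistable case. Finally, I would invoke the equivalence $\mathrm{glct}=\alpha$ (the Fact after the definition of the alpha invariant) to restate everything in terms of $\alpha$, noting the smoothness hypothesis is not literally needed since we may use the algebraic $\mathrm{glct}$ throughout and only translate at the end.

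\medskip

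\textbf{Main obstacle.} I expect the bookkeeping in the inversion-of-adjunction step to be the delicate point: one must correctly account for the coefficient $(1-\beta)$ of $D$ when passing to the restriction — the naive restriction would give $(D, (1-\beta)D|_D + \tfrac{\alpha}{m}E|_D)$ but adjunction produces the \emph{different} (the divisor $\Diff$), and for $D$ Cartier and the pair plt this different vanishes, so the term $D|_D$ that appears is exactly $-K_X|_D + K_D$; getting the precise factor $\tfrac{n+1}{n}$ out of this, rather than something off by the codimension-one adjustment, is where care is required. A secondary subtlety is ensuring that $D$ being a semi-log-canonical pair (the "resp." case) still permits the restriction argument, since then $X$ and $D$ are only $S_2$ and Gorenstein in codimension one rather than normal, so one must use the normalization version of inversion of adjunction alluded to after Theorem \ref{IA}; I would either invoke \cite[Corollary 1.2]{OX11} directly or remark that the reduced irreducible Cartier hypothesis on $D$ makes the correction terms effective and hence harmless for the log-canonicity estimate.
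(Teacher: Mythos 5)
Your overall strategy coincides with the paper's: one checks log canonicity of $(X,(1-\beta)D+\frac{\alpha}{m}E)$ for $\alpha=\frac{n}{n+1}\beta$ by splitting into the region away from $D$, governed by $\mathrm{glct}(X;-K_X)$, and a neighbourhood of $D$, governed via inversion of adjunction by $\mathrm{glct}(D;-K_X|_D)$; whether you package this as a lower bound for $\mathrm{glct}((X,(1-\beta)D);-K_X)$ and quote Theorem \ref{alpha} as a black box, or re-enter its proof as the paper does, is immaterial. However, your step near $D$ does not go through as written. You propose to apply Theorem \ref{IA}(ii) to $(X,\,D+(\frac{\alpha}{m}E-\beta D))$ and to restrict $\frac{\alpha}{m}E-\beta D$ to $D$. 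That ``boundary'' has negative coefficient along $D$, so it is neither effective nor free of common components with the divisor along which you adjoin, and the inversion of adjunction as stated in the paper does not apply to it; moreover, when $D\subset \Supp E$ --- which you cannot exclude, since $E$ ranges over all of $|-mK_X|$ and $D$ itself is an anticanonical divisor --- the restriction $E|_D$ is not even defined. The obstacle you flag, the different, is a red herring here: it vanishes because $D$ is Cartier and the pair is plt. The genuinely delicate point is the multiplicity of $E$ along $D$, which your phrase ``absorb the $\beta D|_D$ term'' does not resolve.

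The missing idea is the paper's decomposition: write $\frac{1}{m}E=aD+F'$ with $0\leq a\leq 1$ and $D\not\subset\Supp F'$ (the bound $a\leq 1$ uses that $\frac{1}{m}E$ is numerically equivalent to the irreducible divisor $D$). Then the $D$-coefficient of $(1-\beta)D+\frac{n}{n+1}\beta\cdot\frac{1}{m}E$ is $1-\beta+\frac{n}{n+1}\beta a\leq 1-\frac{\beta}{n+1}<1$, so it suffices to prove that $(X,\,D+\frac{n}{n+1}\beta F')$ is plt (resp.\ lc) --- this coefficient bookkeeping is precisely where the factor $\frac{n}{n+1}<1$ enters. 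Now Theorem \ref{IA} applies with the legitimate effective boundary $\frac{n}{n+1}\beta F'$: kawamata log terminality (resp.\ log canonicity) of $(D,\frac{n}{n+1}\beta F'|_D)$, which follows from $\beta<(\text{resp.}\leq)\,\frac{n+1}{n}\mathrm{glct}(D;-K_X|_D)$ after adding an effective member of $|maD|_D|$ so as to compare $F'|_D$ with an anticanonical member, gives plt (resp.\ lc) in a neighbourhood of $D$, while $\beta<(\text{resp.}\leq)\,\frac{n+1}{n}\mathrm{glct}(X;-K_X)$ handles $X\setminus D$. Without this decomposition your central step fails, so the proposal has a genuine gap, though it is repairable along exactly these lines.
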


\noindent
Please be careful that $-K_X$ (resp.\ $-K_X|_D$) appeared in the global log canonical 
threshold above mean {\it polarization}, but not boundary divisors attached to ambient variety $X$ (resp.\ $D$). For the proof, 
we follow the viewpoint of ``S-stability" introduced in \cite{Od11c}. 
\begin{proof}[Proof of Theorem \ref{alpha}] 

It follows from the formula \ref{DF.formula} that our log Donaldson-Futaki invariant 
${\rm DF}_{\beta}(\mathcal{B},\mathcal{L}^{\otimes r}(-E))$ is 
\begin{equation}\label{Fano.DF}
-\beta(L^n)((\bar{\mathcal{L}}-E)^n.\bar{\mathcal{L}})+\\ 
(L^n)((\bar{\mathcal{L}}(-E))^n.(n+1)r(K_{\mathcal{B}/((X,(1-\beta)D)\times \mathbb{A}^1)})_{\rm exc}-nE), 
\end{equation}
where $r$ is the exponent of the log semi-test configuration $(\mathcal{B},\mathcal{L}^{\otimes r}(-E))$. 
As \cite[Proposition 4.3]{OS10} proved the first term is always non-negative 
it is enough to show that all the coefficients of exceptional prime disivor 
$(n+1)r(K_{\mathcal{B}/((X,(1-\beta)D)\times \mathbb{A}^1)})_{\rm exc})-nE$ 
is positive (resp.\ non-negative) under the assumption of global log canonical threshold that 
\begin{equation}\label{glct.cond}
{\rm glct}((X,(1-\beta)D),-K_X)>(resp.\ \geq )(n/n+1)\beta. 
\end{equation}

To prove it, we need the following inequalities of discrepancies for any exceptional prime divisor $E_i$ on 
$\mathcal{B}$ in concern: 
$$
a(E_i;(X\times \mathbb{A}^1,(1-\beta)(D\times \mathbb{A}^1)+\frac{n\beta}{r(n+1)}\mathcal{J}+X\times \{0\})) 
$$
$$\geq
a(E_i;(X\times \mathbb{A}^1,(1-\beta)(D\times \mathbb{A}^1)+\frac{n\beta}{r(n+1)}I_0+X\times \{0\})) 
$$
$$
\geq 
a(E_i;(X\times \mathbb{A}^1,(1-\beta)(D\times \mathbb{A}^1)+\frac{n\beta}{r(n+1)}(F\times \mathbb{A}^1)+X\times \{0\}). 
$$

\noindent
Here, $F$ is taken to be an effective $\mathbb{Q}$-divisor which corresponds to an arbitrary non-zero holomorphic section of $H^0(X,I_0^{m}(-rmK_X))$. 
That vector space does not vanish for sufficiently divisible positive integer $m$ 
as our assumption of semi-ampleness of $\mathcal{L}^{\otimes r}(-E)$ 
says $H^0(\mathcal{B},\mathcal{L}^{\otimes rm}(-mE))=H^0(X\times \mathbb{A}^1,
\mathcal{J}^m\mathcal{L}^{\otimes rm})$ generate $\mathcal{J}^m\mathcal{L}^{\otimes rm}$  and $H^0(X,I_0^{m}(-rmK_X))$ 
is just the subspace of $H^0(X\times \mathbb{A}^1,
\mathcal{J}^m\mathcal{L}^{\otimes rm})$ which is fixed by the $\mathbb{G}_m$-action. 
Note that the discrepancy on the first term and second term involve ideal (not necessarily corresponding to divisor) but recall that we can define discrepancy completely similarly in this case 
as we noted in subsection \ref{disc}. 
The comparison between the first term and the second term simply follows 
from $I_0\subset \mathcal{J}$ and the last 
inequality follows from the definition of $E$. 
Note that it is enough to show that the first term 
is bigger than $-1$ (resp.\ at least $-1$) so 
we only need to prove $(X,(1-\beta)D+\frac{n\beta}{n+1}E)$ is purely log terminal 
(resp.\ log canonical) by the inversion of adjunction of 
log-terminality and log-canonicity (Theorem \ref{IA}). 

On the other hand, the condition (\ref{glct.cond}) implies those. 
This completes the proof of Theorem \ref{alpha}. 
\end{proof}

\begin{proof}[Proof of Corollary \ref{weak.alpha}] 
Decompose $E$ which appeared in the proof of Theorem \ref{alpha} 
as $F=aD+F'$ with some $0\leq a \leq 1$ such that 
${\rm Supp}(F')$ does not include $D$. 
Then, to see the kawamata-log-terminality (resp.\ log-canonicity) of 
$(X,(1-\beta)D+\frac{n}{n+1}\beta F)$, it is sufficient to prove 
pure-log-terminality (resp.\ log-canonicity) of 
$(X,D+\frac{n}{n+1}\beta F')$. 
Note that for log terminal version, we assumed $\beta>0$. 

On the other hand, our assumptions imply the following two.  

\begin{Claim}\label{cl}

{\rm (i)}$(X, (\frac{n}{n+1})\beta F')$ is klt (resp.\ lc). 

{\rm (ii)}$(D, (\frac{n}{n+1})\beta F'|_D)$ is also klt (resp.\ lc). 

\end{Claim}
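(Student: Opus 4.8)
The plan is to deduce both parts from the global log canonical threshold hypothesis together with the numerical class of $F'$, using the Seshadri/glct machinery recalled in Section \ref{sec.alpha}. First I would pin down the linear equivalence class of $F'$. Recall $F$ was an effective $\mathbb{Q}$-divisor attached to a nonzero section of $H^0(X, I_0^m(-rmK_X))$ (here $r=1$ after the reduction), so $F \sim_{\mathbb{Q}} -K_X$; writing $F = aD + F'$ with $0 \le a \le 1$ and $D \sim -K_X$ (the anti-canonical hypothesis on the boundary) gives $F' \sim_{\mathbb{Q}} (1-a)(-K_X)$, an effective $\mathbb{Q}$-divisor in a subclass of $|-K_X|_{\mathbb{Q}}$ of ``size" at most one. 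Then part (i) is immediate from the definition of ${\rm glct}(X;-K_X)$: since $F'$ is (a limit of) a $\frac{1}{m}$-scaled member of $|mL|$ with $L = -K_X$ and the scaling factor $(1-a) \le 1 < \frac{n+1}{n}{\rm glct}(X;-K_X)$ (resp. $\le$), the pair $(X, \frac{n}{n+1}\beta F')$ is klt (resp. lc) for all $0 < \beta \le \frac{n+1}{n}{\rm glct}(X;-K_X)$ (resp. the closed inequality), because $\frac{n}{n+1}\beta (1-a) \le \frac{n}{n+1}\cdot\frac{n+1}{n}{\rm glct}(X;-K_X)\cdot 1 = {\rm glct}(X;-K_X)$, which is exactly the threshold below (resp. at) which such divisors keep the pair klt (resp. lc). One has to be slightly careful that $F'$ need not literally lie in a single linear system $\frac1m|mL|$, but an approximation/semicontinuity argument (or working directly with the multiplier ideal of $F'$ and the definition of glct as an infimum over all $m$ and all $E \in |mL|$) handles this; alternatively one invokes that glct controls all effective $\mathbb{Q}$-divisors in the class, not just rational members of linear systems.

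For part (ii) I would restrict to $D$ and use the second hypothesis ${\rm glct}(D;-K_X|_D)$ in Corollary \ref{weak.alpha}. Since ${\rm Supp}(F')$ does not contain $D$, the restriction $F'|_D$ is a well-defined effective $\mathbb{Q}$-divisor on $D$, and by adjunction $-K_X|_D \sim_{\mathbb{Q}} -(K_X+D)|_D = -K_D$ wait — more precisely, since $D \in |-K_X|$, adjunction gives $K_D = (K_X+D)|_D \sim_{\mathbb{Q}} 0$, so $-K_X|_D \sim_{\mathbb{Q}} D|_D$; in any case $-K_X|_D$ is the polarization named in the statement of ${\rm glct}(D;-K_X|_D)$, and $F'|_D \sim_{\mathbb{Q}} (1-a)(-K_X|_D)$ with the same bound $(1-a) \le 1$. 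The same computation as in (i), now on $D$, gives that $(D, \frac{n}{n+1}\beta F'|_D)$ is klt (resp. lc) whenever $\frac{n}{n+1}\beta(1-a) \le {\rm glct}(D;-K_X|_D)$, which holds because $\beta \le \frac{n+1}{n}{\rm min}\{{\rm glct}(X;-K_X),{\rm glct}(D;-K_X|_D)\} \le \frac{n+1}{n}{\rm glct}(D;-K_X|_D)$ (resp. $<$ for the klt version). Here one uses that $F'|_D$ is genuinely a $\mathbb{Q}$-divisor computing a log canonical threshold on $D$; this is legitimate precisely because $D \not\subset {\rm Supp}(F')$.

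The main obstacle I anticipate is purely bookkeeping with the two polarizations and the coefficient $\frac{n}{n+1}\beta$: one must match ``$\frac{n}{n+1}\beta \times (\text{class of } F')$" against the threshold ${\rm glct}(\cdot;-K_X)$ exactly, keeping track of the factor $(1-a)\le 1$ that makes the inequality work with room to spare, and making sure the ``resp." (strict vs. non-strict, klt vs. lc) versions line up throughout — strict $\beta$-inequality and strict glct-inequality giving klt, non-strict giving lc. A secondary subtlety is justifying that glct (defined via members of linear systems $|mL|$) actually controls the arbitrary effective $\mathbb{Q}$-divisor $F'$; this is standard — either by the infimum-over-all-$m$ formulation, or because $F'$ arises from an honest section of $H^0(X,I_0^m(-mK_X))$ and hence is, after scaling, a member of $\frac1m|m(-K_X)|$ anyway — but it deserves a one-line remark. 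Once Claim \ref{cl} is in hand, pure-log-terminality (resp. log-canonicity) of $(X, D + \frac{n}{n+1}\beta F')$ follows from the inversion of adjunction, Theorem \ref{IA}, applied with $D' = D$ and $D'' = \frac{n}{n+1}\beta F'$, thereby closing the proof of Corollary \ref{weak.alpha} along the lines already set up in its proof.
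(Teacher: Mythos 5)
Your proposal is correct and follows essentially the same route as the paper: the paper's own justification of Claim \ref{cl} is the one-line observation that (i) follows from $\beta<\bigl(\frac{n+1}{n}\bigr){\rm glct}(X;-K_X)$ and (ii) from $\beta<\bigl(\frac{n+1}{n}\bigr){\rm glct}(D;-K_X|_D)$, which is exactly the comparison you carry out. Your extra bookkeeping (the class $F'\sim_{\mathbb{Q}}(1-a)(-K_X)$, the factor $1-a\le 1$, and the remark that the global log canonical threshold controls such $\mathbb{Q}$-divisors) just makes explicit what the paper leaves implicit.
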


\noindent
Indeed, the condition {\rm (i)} follows from the condition $\beta<(\frac{n+1}{n}){\rm glct}(X;-K_X)$ and the condition {\rm (ii)} follows from the condition $\beta<(\frac{n+1}{n}){\rm glct}(D; -K_X|_D)$. 

Claim \ref{cl} {\rm (i)} implies that $(X\setminus D,(D+\frac{n}{n+1}\beta E')|_{(X\setminus D)}=(\frac{n}{n+1}\beta E')|_{(X\setminus D)})$ is klt (resp.\ lc) and 
the second condition {\rm (ii)} implies 
$(X,(1-\beta)D+\frac{n}{n+1}\beta E')$ is plt (resp.\ lc) on an open neighborhood of $D$, 
due to the inversion of adjunction \ref{IA}. Combining together, we obtain that 
$(X,(1-\beta)D+\frac{n}{n+1}\beta E')$ is plt (resp.\ lc) as we wanted. 

\end{proof}

\begin{Rem}
If we allow $D$ to be not necessarily Cartier, we obtain similar results 
by considering pair $(D,{\rm Diff}_D(0))$ and associated global log canonical thresholds, 
instead of those of single $D$. Here, ${\rm Diff}_D(0)$ is a {\it different}, which is a divisor of $D$ encoding the failure of adjunction (cf.\ e.g. \cite{Kaw07}). Also we can extend 
to the case where $D$ is not necessarily normal nor Cartier. For that case, we need to 
think global log canonical threshold ${\rm glct}(D,-K_X|_D)$ on the normalization of $D^{\nu}$ with different of conductor divisor ${\rm cond}(\nu)$ attached i.e. ${\rm glct}((D^{\nu},{\rm Diff}_{D^{\nu}}({\rm cond}(\nu)));\nu^*(-K_X|_D))$ instead. 
\end{Rem}

\begin{Rem}
Assume $(X,-K_X)$ is K-stable in the absolute sense, then if we allow $\beta>1$ and consider logarithmic K-stability (resp.\ logarithmic K-semistability) in the same way as in Definitions \ref{DF}, \ref{Kst}, 
$\beta<(resp.\ \leq)(\frac{n+1}{n}){\rm glct}(X)$ simply implies 
log K-stability (resp.\ log K-semistability) with cone angle $2\pi\beta$. 
This is because sub kawamata-log-terminality (resp.\ sub log-canonicity) 
condition of $(X,(1-\beta)D+(\frac{n}{n+1})\beta E)$ implies log K-stability 
(resp.\ log K-semistability) as in the proof of Corollary \ref{weak.alpha} and 
$(1-\beta)D<0$ so that we can simply ignore that term. It is interesting that this 
bound does not depend on $D$. 
\end{Rem}

An easy consequences of Theorem \ref{alpha} is 
\begin{Cor} 
There is no algebraic subgroup of ${\rm Aut}(X, D)$ isomorphic to $\mathbb{G}_m$.  
\end{Cor}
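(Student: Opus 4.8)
The plan is to run the classical obstruction argument in the logarithmic setting: a nontrivial $\mathbb{G}_m\subset\Aut(X,D)$ produces a nontrivial product log test configuration whose log Donaldson--Futaki invariant changes sign under inverting the one-parameter subgroup, which is incompatible with the strict positivity supplied by Theorem \ref{alpha}. Throughout I work in the situation of Theorem \ref{alpha} with the strict inequality ${\rm glct}((X,(1-\beta)D);-K_X)>(n/(n+1))\beta$, so that $((X,D),-K_X)$ is logarithmically K-stable with cone angle $2\pi\beta$.

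First I would fix a hypothetical algebraic subgroup $\lambda\colon\mathbb{G}_m\hookrightarrow\Aut(X,D)$ and linearize it on the polarization. Since $X$ is $\mathbb{Q}$-Fano, some power $-\mu K_X$ is a genuine ample line bundle, and after replacing $\mathbb{G}_m$ by a finite cover (again isomorphic to $\mathbb{G}_m$) the induced action lifts to $-\mu K_X$; thus we may assume $\lambda$ acts on the polarization $L:=-K_X$ in a suitable exponent. Next I would form the product log test configuration $\mathcal{X}:=X\times\mathbb{A}^1$, $\mathcal{D}:=D\times\mathbb{A}^1$, $\mathcal{L}:=p_1^*L$, with $\mathbb{G}_m$ acting by $t\cdot(x,s)=(\lambda(t)x,ts)$ and compatibly on $\mathcal{L}$. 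Because $\lambda$ preserves $D$, this is a log test configuration in the sense of Section \ref{bl.up.sec}; it satisfies Serre's $S_2$ condition, being the product of the $S_2$ scheme $X$ with $\mathbb{A}^1$; and it is nontrivial as a test configuration precisely because $\lambda$ is nontrivial (the underlying family is a product but its $\mathbb{G}_m$-structure is not trivializable).

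Finally I would record the behaviour of ${\rm DF}_\beta$ under $\lambda\mapsto\lambda^{-1}$. Inverting the one-parameter subgroup negates every $\mathbb{G}_m$-weight on the spaces $H^0(X,-mK_X)$ and $H^0(D,-mK_X|_D)$, hence negates $w(m)$ and $\tilde w(m)$, and therefore the coefficients $b_0,b_1,\tilde b_0$, while leaving $a_0,a_1,\tilde a_0$ unchanged; by Definition \ref{DF} this sends ${\rm DF}_\beta$ of the product log test configuration attached to $\lambda$ to its negative. Since both $\lambda$ and $\lambda^{-1}$ give nontrivial $S_2$ log test configurations, logarithmic K-stability (Definition \ref{Kst}) forces ${\rm DF}_\beta>0$ for each, which is impossible as the two numbers are opposite. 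This contradiction shows that no algebraic subgroup of $\Aut(X,D)$ is isomorphic to $\mathbb{G}_m$.

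The only point requiring any care is the preliminary bookkeeping: checking that the $\mathbb{Q}$-Cartier bundle $-K_X$ can be linearized (after a harmless finite cover of $\mathbb{G}_m$) and that the resulting product family is admissible — nontrivial and $S_2$ — in the sense of Definition \ref{Kst}. The sign reversal of ${\rm DF}_\beta$ under $\lambda\mapsto\lambda^{-1}$ is then a routine weight computation within the formalism of Section \ref{bl.up.sec}, so I do not expect a genuine obstacle there.
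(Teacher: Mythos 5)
Your argument is correct and is essentially the paper's own proof: from a hypothetical $\lambda\colon\mathbb{G}_m\hookrightarrow\Aut(X,D)$ one forms the product log test configurations attached to $\lambda$ and $\lambda^{-1}$, observes that their log Donaldson--Futaki invariants are negatives of each other, and contradicts the strict positivity guaranteed by the logarithmic K-stability from Theorem \ref{alpha}. The extra bookkeeping you include (linearizing a power of $-K_X$ after a finite cover of $\mathbb{G}_m$, checking the $S_2$ and nontriviality conditions of Definition \ref{Kst}) is consistent with, and slightly more detailed than, the paper's one-line verification.
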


\noindent
Here, ${\rm Aut}(X,D):=\{ \sigma \in {\rm Aut}(X) \mid \sigma^* (D)=D \} \subset 
{\rm Aut(X)}$ is the automorphism group of the pair 
(cf.\ \cite[Proposition 4.6]{Amb05}). 

\begin{proof}
If there is such a subgroup and consider one corresponding  
non-trivial one paramter subgroup $\lambda \colon \mathbb{G}_m 
\rightarrow {\rm Aut}(X,D)$, then at least one of log Donaldson-Futaki invariants of 
product log test configurations coming from $\lambda$ or $\lambda^{-1}$ should be 
negative as the sum of two is zero. 
\end{proof}

Now given Theorem  \ref{alpha}, one can define for any pair $(X, D)$ an invariant
$$
\beta(X, D):=\sup\{\beta>0|(X, D)\ \text{is log K-stable with cone angle}\ 2\pi\beta\}.
$$
\noindent
It is well defined as, if we take $\mathcal{J}$ which corresponds to maximal ideal 
of $(p,0)\in D\times \{0\}$, then we have 
${\rm DF}_\beta<0$ for $\beta\gg 0$ 
as first part of formula (\ref{Fano.DF}) $-\beta(L^n)((\bar{\mathcal{L}}^{\otimes r}(-E)^n.\mathcal{L})$ vanishes and the second part of formula (\ref{Fano.DF}) goes to $-\infty$ as $\beta\rightarrow\infty$. 
We have also proved that $$\beta(X, D)\geq \biggl(\frac{n+1}{n}\biggr){\rm min}\{{\rm glct}(X;-K_X), {\rm glct}(D;-K_X|_D)\} . $$ 

\noindent
In particular, we proved $\beta(X,D)$ is a positive number. 

The following corollary is a simple application of definition of log-K-stability. 
\begin{Cor}
The pair $(X, D)$ is logarithmically K-stable for $\beta\in(0, \beta(X, D))$, and logarithmically K-unstable for $\beta>\beta(X, D)$. 
\end{Cor}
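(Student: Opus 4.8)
The plan is to deduce both halves of the corollary directly from the definition of $\beta(X,D)$ together with the results already proved, with essentially no new geometry required.

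\smallskip

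\noindent
\emph{Stability for $\beta\in(0,\beta(X,D))$.} First I would observe that $\beta(X,D)$ is by definition the supremum of the set $S:=\{\beta>0\mid (X,D)\text{ is log K-stable with cone angle }2\pi\beta\}$, which we have just shown to be nonempty (indeed $S$ contains the interval $\bigl(0,(\tfrac{n+1}{n})\min\{{\rm glct}(X;-K_X),{\rm glct}(D;-K_X|_D)\}\bigr]$ by Theorem \ref{alpha} and Corollary \ref{weak.alpha}). So the only thing to check is that $S$ is actually an \emph{interval} with left endpoint $0$, i.e. that log K-stability is monotone in $\beta$: if $(X,D)$ is log K-stable with cone angle $2\pi\beta_0$ and $0<\beta_1<\beta_0$, then it is log K-stable with cone angle $2\pi\beta_1$. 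This is where the explicit formula is used. By Corollary \ref{bl.enough} it suffices to treat blow-up (semi-)test configurations $(\mathcal{B},\mathcal{L}^{\otimes r}(-E))$, and for these the log Donaldson–Futaki invariant is, by formula (\ref{Fano.DF}) (in the $\mathbb{Q}$-Fano normalization; the general case is Theorem \ref{DF.formula}), an \emph{affine-linear} function of $\beta$ of the shape ${\rm DF}_\beta={\rm DF}_{1}+(1-\beta)\cdot(\text{boundary part})$. Writing ${\rm DF}_\beta = (1-\beta)A + B$ with $A,B$ independent of $\beta$, and knowing ${\rm DF}_{\beta_0}>0$ together with the endpoint information ${\rm DF}_0\ge 0$ coming from Theorem \ref{CY.gen}(i) applied to $(X,(1)\!\cdot\!D)$ — wait, more carefully: at $\beta=0$ the pair $(X,D)$ with $K_X+D\equiv 0$ gives ${\rm DF}_0\ge 0$ by Theorem \ref{CY.gen}(i), while ${\rm DF}_{\beta_0}>0$; an affine function that is $\ge 0$ at $\beta=0$ and $>0$ at $\beta=\beta_0$ is automatically $>0$ on the open segment $(0,\beta_0)$, hence in particular at $\beta_1$. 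Since this holds for every test configuration of the relevant type, $(X,D)$ is log K-stable with cone angle $2\pi\beta_1$, proving $(0,\beta(X,D))\subseteq S$.

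\smallskip

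\noindent
\emph{Instability for $\beta>\beta(X,D)$.} For the second assertion I would use the specific destabilising test configuration already exhibited in the paragraph defining $\beta(X,D)$: take $\mathcal{J}$ the flag ideal associated to the maximal ideal of a point $(p,0)\in D\times\{0\}$. For this $\mathcal{B}$ the first term $-\beta(L^n)((\bar{\mathcal{L}}-E)^n.\bar{\mathcal{L}})$ of (\ref{Fano.DF}) vanishes and the remaining term tends to $-\infty$ as $\beta\to\infty$; since it is again affine-linear in $\beta$, it is strictly negative for all $\beta$ larger than some threshold $\beta_1$, and in fact for all $\beta>\beta(X,D)$ by the monotonicity just established (if ${\rm DF}_\beta<0$ for $\beta$ large then the affine function is decreasing, so ${\rm DF}_\beta<0$ for all $\beta$ beyond its unique root $\beta_1\le\beta(X,D)$; and it cannot be $\ge 0$ on all of $(0,\beta(X,D))$ and then jump, so $\beta_1=$ the root and ${\rm DF}_\beta<0$ for $\beta>\beta(X,D)$). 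Hence for every $\beta>\beta(X,D)$ there is a non-trivial test configuration with negative ${\rm DF}_\beta$, i.e. $(X,D)$ is log K-unstable with cone angle $2\pi\beta$.

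\smallskip

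\noindent
\emph{Main obstacle.} The genuinely substantive point — and the only one requiring care — is the affine-linearity of $\beta\mapsto{\rm DF}_\beta$ and the sign bookkeeping at the two ends; everything else is formal from the definition of $\beta(X,D)$ as a supremum. The affine-linearity is immediate from Definition \ref{DF} (the boundary part $(1-\beta)(a_0\tilde b_0-b_0\tilde a_0)$ is visibly affine in $\beta$), so the real work is just checking that ${\rm DF}_0\ge 0$ (Theorem \ref{CY.gen}(i)) legitimately applies to force positivity on the open interval rather than merely non-negativity — equivalently, that strict positivity at a single interior point of an affine function propagates leftwards given non-negativity at the left endpoint. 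That elementary observation, applied uniformly over all test configurations, is the whole content of the proof.
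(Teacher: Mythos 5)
Your first half is correct and is essentially the argument the paper leaves implicit (the paper offers no written proof, calling the corollary a simple application of the definition): by Definition \ref{DF} the function $\beta\mapsto{\rm DF}_\beta$ is affine for each fixed test configuration, ${\rm DF}_0\ge 0$ for every nontrivial configuration by Theorem \ref{CY.gen}(i) (since $K_X+D\sim 0$ and $(X,D)$ is lc in the setting of section \ref{sec.alpha}), and an affine function that is $\ge 0$ at $0$ and $>0$ at some $\beta_0$ in the stable set is $>0$ on $(0,\beta_0)$; letting $\beta_0$ approach the supremum gives log K-stability on $(0,\beta(X,D))$.

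The instability half, however, has a genuine gap. You claim that the single explicit configuration obtained from the maximal ideal of a point $(p,0)\in D\times\{0\}$ has ${\rm DF}_\beta<0$ for \emph{every} $\beta>\beta(X,D)$, on the grounds that its root $\beta_1$ satisfies $\beta_1\le\beta(X,D)$. That inequality is backwards and does not follow: stability on $(0,\beta(X,D))$ forces $\beta_1\ge\beta(X,D)$ for this configuration, and nothing prevents $\beta_1>\beta(X,D)$, because $\beta(X,D)$ is a supremum over \emph{all} test configurations and the one responsible for the failure of stability just above $\beta(X,D)$ may be a different one; in the paper this explicit configuration is only used to show $\beta(X,D)<\infty$. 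The phrase ``it cannot be $\ge 0$ on all of $(0,\beta(X,D))$ and then jump'' has no force when applied to one fixed configuration. The correct argument is a two-point interpolation with a configuration that varies: fix $\gamma>\beta(X,D)$ and choose $\beta'$ with $\beta(X,D)<\beta'<\gamma$; by definition of the supremum the pair is not log K-stable at $\beta'$, so some nontrivial test configuration has ${\rm DF}_{\beta'}\le 0$, while by the first half it has ${\rm DF}_{\beta_*}>0$ for any $\beta_*\in(0,\beta(X,D))$; its affine Donaldson--Futaki function is therefore strictly decreasing, whence ${\rm DF}_\gamma<{\rm DF}_{\beta'}\le 0$, i.e.\ $(X,D)$ is log K-unstable at cone angle $2\pi\gamma$. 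With this replacement your proof is complete; note that the destabilizing configuration is allowed to depend on $\gamma$.
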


We expect the pair $(X,D)$ is logarithmically K-semi-stable for $\beta=\beta(X,D)$. This fits in with the conjecture of Donaldson \cite{Don11}, in terms of existence of K\"ahler-Einstein metrics on $X$ with cone singularities along $D$.  More precisely,  when $X$ and $D$ are smooth, one can also define an invariant
$$
R(X, D):=\sup\{\beta>0|\exists\ \text{a KE metric on}\ X \text{with cone angle}\ 2\pi \beta \ \text{along } D)\}. 
$$
The logarithmic version of the Yau-Tian-Donaldson conjecture would suggest that $\beta(X, D)=R(X, D)$. 

\section{Log K-stability and semi-log-canonicity}

In this section, we generalize \cite[Theorem 1.1, 1.2]{Od08} as follows. 

\begin{Thm}\label{slc}

{\rm (i)}
If a log polarized variety 
$((X,D),L)$ is logarithmically K-semistable with cone angle $2\pi\beta$, then $(X,(1-\beta)D)$ is 
semi-log-canonical pair. 

{\rm (ii)}
If a log $\mathbb{Q}$-Fano anti-(pluri-)canonically polarized variety 
$((X,D),L)$ is logarithmically K-semistable with cone angle $2\pi\beta$ and $L=\mathcal{O}_{X}(-m(K_X+(1-\beta)D))$ with $m\in \mathbb{Z}_{>0}$, 
then $(X,(1-\beta)D)$ is kawamata-log-terminal pair with $\beta>0$. 
\end{Thm}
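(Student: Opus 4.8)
The plan is to follow the strategy of \cite{Od08} for the absolute case, using the explicit intersection-theoretic formula for the log Donaldson--Futaki invariant from Theorem \ref{DF.formula} together with the decomposition into ``canonical divisor part'' and ``discrepancy term''. For part (i), suppose $(X,(1-\beta)D)$ is \emph{not} semi-log-canonical. Then there is an exceptional divisor over $X$ with discrepancy $< -1$ for the pair $(X,(1-\beta)D)$; the idea is to realize such a divisor (or rather its effect) by choosing a suitable flag ideal $\mathcal{J}$ on $X\times\mathbb{A}^1$ so that the blow-up $\mathcal{B}=Bl_{\mathcal{J}}(X\times\mathbb{A}^1)$ produces a $\Pi$-exceptional prime divisor $E_i$ lying over $X\times\{0\}$ whose coefficient in $(K_{\mathcal{B}/((X,(1-\beta)D)\times\mathbb{A}^1)})_{\mathrm{exc}}$ is strongly negative. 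Concretely one takes $\mathcal{J} = I + (t^N)$ where $I$ is (a high power of) the ideal whose blow-up extracts the bad valuation, and $N$ is chosen large. The standard computation in \cite{Od08} shows that, as $N\to\infty$, the discrepancy term dominates the canonical divisor part in the formula of Theorem \ref{DF.formula}, and the negativity of the coefficient forces $\mathrm{DF}_\beta(\mathcal{B},\mathcal{L}(-E)) < 0$, contradicting log K-semistability. One must also check that $\mathcal{L}(-E)$ is semiample for the chosen $\mathcal{J}$ (take $\mathcal{L}^{\otimes r}$ with $r\gg 0$) and that $\mathcal{B}$ can be taken Gorenstein in codimension $1$, using the partial normalization trick from the proof of Corollary \ref{bl.enough}.

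For part (ii), one argues similarly but now must upgrade ``semi-log-canonical'' to ``kawamata-log-terminal'' in the anti-canonically polarized Fano setting. Here $L = \mathcal{O}_X(-m(K_X+(1-\beta)D))$, so the canonical divisor part simplifies considerably — as in formula (\ref{Fano.DF}) the first term becomes a multiple of $-\beta(L^n)((\bar{\mathcal{L}}-E)^n.\bar{\mathcal{L}})$, which by \cite[Proposition 4.3]{OS10} is non-negative, so \emph{any} negativity of the discrepancy-type term yields instability. Suppose $(X,(1-\beta)D)$ is slc but not klt: then there is a divisor with discrepancy exactly $-1$ (a log canonical center), or $\beta=0$. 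Ruling out $\beta = 0$ is immediate since for $\beta=0$ one would need $K_X+D\equiv 0$ which contradicts $-K_X$ being ample unless... actually one simply observes the polarization degenerates; the substantive case is an lc-but-not-klt center with $\beta>0$. Using a flag ideal adapted to that center (again of the form $I+(t^N)$), the discrepancy term contributes a \emph{zero} coefficient to leading order but the subleading analysis, combined with strict positivity $((\bar{\mathcal{L}}^{\otimes r}(-E))^n.E)>0$ from the Fact cited in the proof of Theorem \ref{CY.gen}, shows $\mathrm{DF}_\beta \le 0$ with equality only in degenerate situations — and a refinement of the choice of $\mathcal{J}$ (e.g. perturbing $I$ or increasing its order of vanishing along the center) pushes it strictly negative. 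This is essentially the log analogue of the step in \cite{Od08} distinguishing lc from klt for Fanos.

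The main obstacle I anticipate is the careful bookkeeping in the destabilizing computation: one needs the coefficient of the relevant $\Pi$-exceptional divisor in $(K_{\mathcal{B}/((X,(1-\beta)D)\times\mathbb{A}^1)})_{\mathrm{exc}}$ to be controlled in terms of the discrepancy $a(E_i;(X,(1-\beta)D))$ together with the valuation of $\mathcal{J}$ along $E_i$, and to verify that the positivity $((\bar{\mathcal{L}}-E)^n.E)>0$ and semiampleness of $\mathcal{L}^{\otimes r}(-E)$ persist as one takes the limit $N\to\infty$ in the flag ideal. In the log setting there is the extra subtlety that $D$ is only $\mathbb{Q}$-Cartier (not Cartier) and $X$ is only $S_2$ and Gorenstein in codimension $1$, so one must work with $\mathbb{Q}$-divisor pullbacks and the inversion of adjunction in its general (non-Cartier) form referenced after Theorem \ref{IA}; the semi-log-canonical (non-normal) case additionally requires passing to the normalization with conductor, as in \cite{OX11}. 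Once the absolute argument of \cite{Od08} is transcribed with the boundary term $(1-\beta)D$ inserted uniformly and the sign of the boundary contribution tracked, the proof should go through without essentially new ideas.
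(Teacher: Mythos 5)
Your global strategy---destabilize with a flag-ideal blow-up and read off the sign from Theorem \ref{DF.formula}---is the paper's, but at the two decisive points your argument does not go through. For (i), you take $\mathcal{J}=I+(t^N)$ with $I$ extracting a single valuation of discrepancy $<-1$ and claim that as $N\to\infty$ the discrepancy term dominates the canonical divisor part. Neither half is justified: the blow-up of such a $\mathcal{J}$ on $X\times\mathbb{A}^1$ has in general many $\Pi$-exceptional prime divisors, and the discrepancy term is the sum of all their coefficients weighted by the non-negative intersection numbers $((\bar{\mathcal{L}}^{\otimes r}(-E))^n.E_i)$, so negativity of one coefficient gives no sign for the sum; and increasing $N$ rescales both parts of the formula, so no domination statement comes for free. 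The paper's proof removes both problems with one choice: $I$ is taken to define the semi-log-canonical model of $(X,(1-\beta)D)$, which exists by \cite{OX11}, and the negativity lemma then makes every exceptional coefficient of $(K_{\mathcal{B}/((X,(1-\beta)D)\times\mathbb{A}^1)})_{\rm exc}$ negative for $\mathcal{J}=\overline{((I+(t^l))^N)}$, whence ${\rm DF}_\beta<0$ for $r\gg0$. The appeal to \cite{OX11} together with the negativity lemma is exactly the ingredient your sketch lacks.

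For (ii) the sign logic is reversed. The non-negativity of the first term of (\ref{Fano.DF}) (\cite[Proposition 4.3]{OS10}) works in favour of stability, so negativity of the discrepancy-type term alone yields no conclusion; moreover (\ref{Fano.DF}) was derived for $D\in|-K_X|$ and $L=-K_X$, not for the general log $\mathbb{Q}$-Fano polarization $L=\mathcal{O}_X(-m(K_X+(1-\beta)D))$ of Theorem \ref{slc}(ii). The paper's mechanism is the opposite of yours: for an lc-but-not-klt pair which is klt in codimension one, $\mathcal{J}$ is chosen as in (i) so that $(K_{\mathcal{B}/((X,(1-\beta)D)\times\mathbb{A}^1)})_{\rm exc}=0$ exactly, and the strict negativity of ${\rm DF}_\beta$ comes from the canonical divisor part, which is negative precisely because the polarization is a negative multiple of $K_X+(1-\beta)D$; no ``subleading analysis'' or perturbation of $I$ is needed, and the perturbation you invoke is not justified. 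You also omit the case where $(X,(1-\beta)D)$ is slc but non-normal or not klt in codimension one (klt implies normal, so this case genuinely occurs and is not excluded by (i)); the paper destabilizes it by a flag ideal co-supported in dimension $n-1$ inside that locus, for which the top-degree coefficient in $r$ of ${\rm DF}_\beta$ is $(\mathcal{L}^{n-1}.E^2)<0$. This same step, not any degeneration of the polarization, is what rules out $\beta=0$, since for $\beta=0$ the reduced divisor $D$ itself is a codimension-one non-klt locus.
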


\begin{proof}

We prove it in completly similar way as in \cite{Od08}. Assume the contrary. 
First, we argue version {\rm (i)}. 
As a first step of the proof, take the semi-log-canonical model $\pi\colon B=Bl_{I}(X)\rightarrow (X,(1-\beta)D)$ of $(X,(1-\beta)D)$, which is possible by \cite{OX11}. 
Then, all the coeffecients of $(K_{B/((X,(1-\beta)D)})_{\rm exc}$ is less than $-1$ 
by the negativity lemma (cf.\ \cite[Lemma 3.38]{KM98}). 
Second, if $X$ is normal we take $\mathcal{J}:=\overline{((I+(t^l))^N)}$ where $\bar{}$ 
denotes integral closure of ideal, $l$ is sufficiently divisible positive integer and $N\gg 0$.  If $X$ is non-normal, as similarly as in 
\cite[section5]{Od08} or Corollary \ref{bl.enough}, we first 
take partial normalization $\mathcal{B}^{p\nu}$ 
of ${\rm Bl}_{I+(t^l)}(X\times \mathbb{A}^1)$ with sufficiently divisible $l\in \mathbb{Z}_{>0}$ and 
take corresponding flag ideal $\mathcal{J}$ whose blow up is $\mathcal{B}^{p\nu}$. 

Then, as in \cite{Od08}, all the coefficients of 
$(K_{\mathcal{B}/((X,(1-\beta)D)\times \mathbb{A}^1)})_{{\rm exc}}$ 
are negative for this $\mathcal{J}$ so that 
${\it DF}_{\beta}(\mathcal{B},\mathcal{L}^{\otimes r}(-E))<0$ for $r \gg 0$ 
by the formula \ref{DF.formula}. 
Hence, this implies logarithimic K-unstability of $((X,D),L)$ with cone angle $2\pi\beta$. 
This completes the proof for the general case {\rm (i)}. 

In the case {\rm (ii)}, if $(X,(1-\beta)D)$ is semi-log-canonical but 
not klt in codimension $1$ 
or not normal, then we can take 
a flag ideal $\mathcal{J}$ with ${\rm Supp}(\mathcal{O}/\mathcal{J})$ has dimension $n-1$ which is included in  non-kawamata-log-terminal locus or non-normal locus, 
such that $\mathcal{B}$ is Gorenstein in codimension $1$ (otherwise, take partial normalization) the coefficients of an exceptional prime divisor $E_i$ of  
$(K_{\mathcal{B}/((X,(1-\beta)D)\times \mathbb{A}^1)})_{{\rm exc}}$ is $0$ if 
${\rm dim}(\Pi(E_i))=n-1$.  
(Recall that we did similar procedure in \cite[section 6]{Od08}). 
In this case, the leading coefficient of 
${\rm DF}_{\beta}(\mathcal{B},\mathcal{L}^{\otimes r}(-E))$ with respect to the variable $r$ is $(\mathcal{L}^{n-1}.E^2)<0$. 

Thus, we can assume that $(X,(1-\beta)D)$ is klt in codimension $1$. 
Assume that it is not klt. Then, we can take non-trivial flag ideal $\mathcal{J}$ 
with 
$(K_{\mathcal{B}/((X,(1-\beta)D)\times \mathbb{A}^1)})_{{\rm exc}}=0$ in the same way 
as for the case {\rm (i)}. 
On the other hand, in this log $\mathbb{Q}$-Fano case {\rm (ii)}, the canonical divisor  part is always negative so that the whole log Donaldson-Futaki invariant is also negative. 
This completes the proof of the log $\mathbb{Q}$-Fano case ${\rm (ii)}$. 
\end{proof}

\begin{Rem}
Note that the case {\rm (ii)} discussed above corresponds to log $\mathbb{Q}$-Fano case, which are more general than the pair we discussed in section \ref{sec.alpha} i.e., 
$\mathbb{Q}$-Fano varieties with anti-canonical boundaries. 
\end{Rem}

\noindent
By combining Theorem \ref{CY.gen} and Theorem \ref{slc} {\rm (i)}, 
we get the following. 

\begin{Cor}
{\rm (i)}
Assume $(X, (1-\beta)D)$ is a log Calabi-Yau pair, i.e., $K_X+(1-\beta)D$ is numerically equivalent to zero divisor with a polarization $L$. Then,  $((X,D),L)$ is logarithmically K-semistable with cone angle $2\pi\beta$ 
if and only if $(X,(1-\beta)D)$ is a semi-log-canonical pair. 

{\rm (ii)}
Assume $(X,(1-\beta)D)$ is (pluri-)log canonically polarized, i.e., $K_X+(1-\beta)D$ is ample and $L=\mathcal{O}_{X}(m(K_X+(1-\beta)D))$ for $m\in \mathbb{Z}_{>0}$. 
Then, the following three conditions are equivalent: 

{\rm (a)} $((X,D),L)$ is log K-stable with cone angle $2\pi\beta$,  

{\rm (b)} $((X,D),L)$ is log K-semistable with cone angle $2\pi\beta$, 

{\rm (c)} $(X,(1-\beta)D)$ is semi-log-canonical. 
\end{Cor}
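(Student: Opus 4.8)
The plan is to obtain both statements as formal consequences of Theorem \ref{CY.gen} and Theorem \ref{slc} (i), using in addition the elementary observation that logarithmic K-(semi/poly)stability of $((X,D),L)$ is unchanged under replacing $L$ by a positive tensor power $L^{\otimes m}$: by the homogeneity of the normalisation in Definition \ref{DF} (visible directly from the intersection formula of Theorem \ref{DF.formula}), passing from $L$ to $L^{\otimes m}$ merely multiplies every ${\rm DF}_\beta$ by a fixed positive constant, while sending log test configurations bijectively to log test configurations. This lets us move freely between $L$ and the $\mathbb{Q}$-divisor $K_X+(1-\beta)D$.

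For part (i), the ``if'' direction is exactly Theorem \ref{CY.gen} (i): a semi-log-canonical log Calabi--Yau pair $(X,(1-\beta)D)$ is logarithmically K-semistable for the polarisation $L$. The ``only if'' direction is exactly Theorem \ref{slc} (i), which requires no hypothesis relating $L$ to $K_X+(1-\beta)D$: logarithmic K-semistability forces $(X,(1-\beta)D)$ to be semi-log-canonical. Chaining the two gives the stated equivalence.

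For part (ii), I would argue (a) $\Rightarrow$ (b) $\Rightarrow$ (c) $\Rightarrow$ (a). The first implication is immediate from Definition \ref{Kst}. The implication (b) $\Rightarrow$ (c) is again Theorem \ref{slc} (i). For (c) $\Rightarrow$ (a): under the hypotheses of (ii), $K_X+(1-\beta)D$ is ample and $(X,(1-\beta)D)$ is semi-log-canonical, so $(X,(1-\beta)D)$ is literally a semi-log-canonical model in the sense of Theorem \ref{CY.gen} (ii); that theorem then yields logarithmic K-stability of $((X,D),K_X+(1-\beta)D)$, and by the scaling observation above this is equivalent to logarithmic K-stability of $((X,D),L)$ with $L=\mathcal{O}_X(m(K_X+(1-\beta)D))$, which is (a). Here one chooses $m$ sufficiently divisible that $L$ is an honest line bundle, which forces $\beta\in\mathbb{Q}$; the boundary-free case $\beta=0$ is the absolute statement of \cite{Od08}, \cite{Od11b}.

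I do not expect a substantial obstacle: the geometric content is entirely carried by Theorems \ref{CY.gen} and \ref{slc}, whose proofs are already given. The only points needing care are bookkeeping ones --- matching the polarisation $L$ with the $\mathbb{Q}$-divisor $K_X+(1-\beta)D$ through the scaling invariance of ${\rm DF}_\beta$, and the rationality/integrality constraints on $\beta$ implicit in demanding that $L$ be an ample line bundle --- and, slightly more delicately, checking that the non-normal and codimension-one Gorenstein reductions used in the proof of Theorem \ref{slc} (i) are compatible with the specific polarisation chosen in (ii), which is in fact already handled in that proof.
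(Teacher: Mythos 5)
Your argument is correct and is exactly the paper's route: the paper deduces this corollary in one line by combining Theorem \ref{CY.gen} with Theorem \ref{slc} (i), which is precisely your chaining for (i) and your cycle (a)$\Rightarrow$(b)$\Rightarrow$(c)$\Rightarrow$(a) for (ii). Your explicit remark on the invariance of ${\rm DF}_\beta$ under replacing $L$ by $L^{\otimes m}$ (to pass between the polarization $K_X+(1-\beta)D$ of Theorem \ref{CY.gen} (ii) and $L=\mathcal{O}_X(m(K_X+(1-\beta)D))$) is a bookkeeping point the paper leaves implicit, not a different method.
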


\end{document}